\def\squarebox#1{\hbox to #1{\hfill\vbox to #1{\vfill}}}
\theoremstyle{plain}
\newtheorem{Thm}{Theorem}
\newtheorem{Def}{Definition}
\newtheorem{lem}{Lemma}
\newcommand{\im}{\mathfrak I}
\newcommand{\Div}{\textrm{div}}
\newcommand{\R}{\mathbb{R}}
\newcommand{\U}{{\mathcal U}}
\newcommand{\CI}{{\mathcal C}^{\infty}_{0}({\R}^{n}) }
\newcommand{\Ci}{{\mathcal C}^{\infty}_{0}(\Omega(t)) }
\newcommand{\C}{{\mathbb C}}
\newcommand{\half}{\frac{1}{2}}
\newcommand{\CII}{{\mathcal C}^{\infty}_{0}(\vert x\vert\leq b) }
\def\epsilon{\varepsilon}
\def\phi {\varphi}
\newtheorem{rem}{Remark}
\newtheorem{prop}{Proposition}
\newtheorem{defi}{Definition}
\providecommand{\abs}[1]{\left\lvert#1\right\rvert}
\providecommand{\norm}[1]{\left\lVert#1\right\rVert}
\numberwithin{equation}{section}
\renewcommand{\d}{\textrm{d}}
\renewcommand{\leq}{\leqslant}
\renewcommand{\geq}{\geqslant}
\providecommand{\abs}[1]{\left\lvert#1\right\rvert}
\providecommand{\norm}[1]{\left\lVert#1\right\rVert}
\begin{document}

\title[Local energy decay ]
{ Local energy decay  for the wave equation  with a time-periodic non-trapping metric  and moving obstacle}

\author[Y. Kian]{Yavar Kian}

\address {Centre de Physique Th\'eorique CNRS-Luminy, Case 907, 13288 Marseille, France}

\email{Yavar.Kian@cpt.univ-mrs.fr}
\maketitle
\begin{abstract}
Consider the mixed problem with Dirichelet condition associated to the wave equation $\partial_t^2u-\Div_{x}(a(t,x)\nabla_{x}u)=0$, where the scalar metric $a(t,x)$ is $T$-periodic in $t$ and uniformly equal to $1$ outside a compact set in $x$, on a $T$-periodic domain. Let $\mathcal U(t, 0)$ be the associated propagator. Assuming that the perturbations are non-trapping, we prove the meromorphic continuation of the cut-off resolvent of the Floquet operator $\mathcal U(T, 0)$ and  we establish sufficient conditions for local energy decay.
 
\end{abstract}

\section*{Introduction}
\renewcommand{\theequation}{\arabic{section}.\arabic{equation}}
\setcounter{equation}{0}
Let $\Omega$ be an open domain in ${\R}^{1+n}$, $n\geq3$ with $\mathcal C^\infty$ boundary $\partial\Omega$. Introduce the sets
\[\Omega(t)=\{x\in{\R}^n\ :\ (t,x)\in\Omega\},\quad O(t)={\R}^n\setminus \Omega(t), \quad t\in{\R}.\]
We assume that there exists $\rho_1>0$ such that for all $t\in{\R}$
\begin{equation}\label{obstacle1}O(t)\subset\{ x\ :\ \vert x\vert\leq\rho_1\}.\end{equation}
Moreover there exists $T>0$ such that
\begin{equation}\label{obstacle2}O(t+T)=O(t),\quad t\in{\R}.\end{equation}
For each $(t,x)\in\partial\Omega$, let $\nu(t,x)=(\nu_t(t,x),\nu_x(t,x))$ be the exterior unit normal vector  to $\partial\Omega$ at $(t,x)\in\partial\Omega$ pointing into $\Omega$. Then, we assume that there exists $0<c<1$ such that 
\begin{equation}\label{normal} \vert\nu_t\vert<c\vert\nu_x\vert.\end{equation}
Consider the following mixed problem
\begin{equation} \label{leprobleme}  \left\{\begin{aligned}
u_{tt}-\Div_{x}(a(t,x)\nabla_{x}u)&=0,\ \ (t,x)\in\Omega,\\
u_{|\partial\Omega}&=0\\
(u,u_{t})(s,x)=(f_{1}(x),f_{2}(x))&=f(x),\ \ x\in\Omega(s),\end{aligned}\right.\end{equation}
where the perturbation $a(t,x)\in \mathcal C^\infty({\R}^{n+1})$ is a scalar function which satisfies the conditions:
\begin{equation}\label{eq=perturbationA}\begin{array}{l}(i) \ C\geq a(t,x)\geq c>0,\   (t,x)\in{\R}^{n+1},\\
(ii)\ \textrm{ there exists }\rho>\rho_1\textrm{ such that }a(t,x)=1\textrm{ for }\vert x\vert\geq\rho,\\
(iii)\textrm{ there exists }T>0\textrm{ such that }a(t+T,x)=a(t,x),\   (t,x)\in{\R}^{n+1}.\\
\end{array}\end{equation}
Throughout this paper we assume   $n\geq3$. Consider the set $H(t)$ which is the closure of the space $\Ci\times\Ci$ with respect to the norm
\[\displaystyle\norm{f}_{H(t)}=\left(\underset{{\Omega(t)}}\int\left(\abs{\nabla_xf_1}^2+\abs{f_2}^2\right)\d x\right)^\half,\quad f=(f_1,f_2)\in\Ci\times\Ci.\]

Let us introduce  some general properties of solutions of \eqref{leprobleme}. We show, in Section 1, that for $f\in H(s)$ there exists a unique solution of \eqref{leprobleme} and we  introduce  the propagator
\begin{equation}\label{pro}\mathcal{U}(t,s):H(s)\ni(f_1,f_2)=f\mapsto \mathcal{U}(t,s)f=(u,u_t)(t,x)\in H(t)\end{equation}
with $u$ the solution of \eqref{leprobleme}. Moreover, we prove that $\mathcal{U}(t,s)$ is a bounded operator satisfying the following estimate
\begin{equation}\label{energy}\norm{\mathcal{U}(t,s)}_{\mathcal L(H(s),H(t))}\leq Be^{A\abs{t-s}}.\end{equation}

The goal of  this paper is to establish  sufficient conditions    for a local energy decay taking the form
\begin{equation} \label{eq=locA}\Vert\chi \mathcal U(t,s)\chi\Vert_{\mathcal L(H(s),H(t))}\leq C_{\chi}p(t-s),\quad t\geq s,\end{equation}
with $p(t)\in L^1({\R}^+)$ and $\chi\in\mathcal C^\infty_0(\vert x\vert\leq\rho+1)$. 

We study problem \eqref{leprobleme} under a {\bf non-trapping} condition. More precisely, let $\mathcal U(t,s,x,x_0)$ be the kernel of the propagator $\mathcal{U}(t,s)$ and consider the following
\begin{enumerate}
\item[$\rm(H1)$]
 For all $r>0$, there exists $T_1(r)>0$ such that 
 \[\mathcal U(t,s,x,x_0)\in\mathcal C^\infty\left(\{(t,s,x,x_0)\ :\ \abs{x}\leq r,\ \abs{x_0}\leq r,\ \abs{t-s}\geq T_1(r)\}\right).\]
\end{enumerate}
From   \cite{MS}, we know that singularities propagate along null-bicharacteristics (with consideration of their reflections from $\partial\Omega$). Thus, one can show that condition $\rm(H1)$ is equivalent to the requirement that all null-bicharacteristics of \eqref{leprobleme} with consideration of reflections
from $\partial\Omega$ go out to infinity as $\abs{t-s}\to+\infty$.
 Let us recall that the non-trapping condition (H1) is necessary for  \eqref{eq=locA}  since for some trapping perturbations we may have solutions with exponentially increasing  energy (see \cite{CR} for $\Omega={\R}^{1+n}$ and \cite{RP} for $a(t,x)=1$). On the other hand, even for non-trapping periodic perturbations some parametric resonances could lead to solutions with exponentially growing  energy (see \cite{CPR} for   time-periodic potentials). To exclude the existence of such solutions we must consider a second assumption.

Many authors  have investigated the  local  energy decay  of wave equations. The main hypothesis is that the perturbations are non-trapping. For $a(t,x)=a_0(x)$ independent of time and fixed obstacles,  the meromorphic continuation and estimates of the cut-off resolvent $ \chi\left(-\Div_x(a_0(x)\nabla_x.)-\lambda^2\right)^{-1}\chi$, where $\chi\in\CI$ and $\lambda\in\C$,  are the  main arguments for estimate \eqref{eq=locA} (see \cite{V3}, \cite{V1}, \cite{V5} and \cite{V4}). From  these results, by considering the connection between the Fourier transform in time of the solutions  and the  stationary operator $-\Div_x(a_0(x)\nabla_x.)-\lambda^2$, one can deduce \eqref{eq=locA}. For time dependent metric $a(t,x)$ or moving obstacle, since the domain or the Hamiltonian $-\Div_x(a(t,x)\nabla_x.)$ are time-dependent, we cannot apply these arguments. However, the analysis of the Floquet operator $\mathcal U(T,0)$ makes it possible to obtain \eqref{eq=locA} with  $T$-periodic perturbations and moving obstacle. In \cite{CS} the authors have extended the Lax-Phillips theory to  problem \eqref{leprobleme} with $a(t,x)=1$ and they have established a local energy decay \eqref{eq=locA}. By using the compactness of the local evolution operator, deduced from a propagation of singularities, and the RAGE theorem of Georgiev and Petkov (see \cite{GP}), Bachelot and Petkov  have shown in \cite{BP1} that in the case of odd dimensions, the decay of the local energy associated to the wave equation with time periodic potential  is exponential for initial data  with compact support  included in a subspace of finite codimension. Petkov has extended this result to even dimensions (see \cite{P2}), by using the meromorphic continuation of the cut-off resolvent of the Floquet operator associated to this problem.

Let us introduce the cut-off resolvent, associated to the Floquet operator $\mathcal{U}(T,0)$, defined by \[R_{\psi_1,\psi_2}(\theta)=\psi_1(\mathcal{U}(T,0)-e^{-i\theta})^{-1}\psi_2:\ H(0)\to H(0),\quad\psi_1,\psi_2\in{\CI}.\]
According to \eqref{energy}, $R_{\psi_1,\psi_2}(\theta)$ is a family of bounded operators analytic with respect to $\theta$ on $\{\theta\in\mathbb C\ :\ \im(\theta)>AT\}$. 
Applying some arguments of \cite{V2}, in Section 2, we show the meromorphic continuation of $R_{\psi_1,\psi_2}(\theta)$  to $\C$ for $n$ odd and to $\{\theta\in\mathbb{C}\  :\   \theta\notin 2\pi\mathbb{Z}+i\R^-\}$ for $n$ even. Let us recall that the meromorphic continuation of $R_{\psi_1,\psi_2}(\theta)$ is closely related to the asymptotic expansion of $\chi\mathcal{U}(t,0)\chi$, $\chi\in\CI$, as $t\to+\infty$ (see Section 2 and the main theorem in \cite{V2}).   Consequently, it seems natural to consider the meromorphic continuations of $R_{\psi_1,\psi_2}(\theta)$ that imply  \eqref{eq=locA}. Consider the following assumption.

\begin{enumerate}
\item[$\rm(H2)$] There exist $\phi_1,\phi_2\in{\CI}$, satisfying $\phi_1(x)=\phi_2(x)=1$ for $\abs{x}\leq\rho+T+2$, such that the operator $R_{\phi_1,\phi_2}(\theta)$ admits an analytic continuation from $\{\theta\in\mathbb{C}\  :\   \textrm{Im}(\theta) \geq A > 0\}$ to
$\{\theta\in\mathbb{C}\  :\   \textrm{Im}(\theta) \geq  0\}$, for $n \geq 3$, odd, and to $\{\theta\in\mathbb{C}\  :\   \textrm{Im}(\theta) >  0\}$ for $n \geq4$, even.
Moreover, for $n$ even, $R_{\phi_1,\phi_2}(\theta)$ admits a continuous continuation from $\{\theta\in\mathbb{C}\  :\   \textrm{Im}(\theta) > 0\}$ to $\{\theta\in\mathbb{C}\  :\   \textrm{Im}(\theta) \geq  0,\theta\neq 2k\pi, k\in\mathbb{Z}\}$ and we have
\[\limsup_{\substack{\lambda\to0 \\ \textrm{Im}(\lambda)>0}}\Vert R_{\phi_1,\phi_2}(\lambda)\Vert<\infty.\]
\end{enumerate}
Assuming (H1) and (H2) fulfilled, we obtain the following.
\begin{Thm} Assume \eqref{obstacle1}, \eqref{obstacle2}, \eqref{normal}, \eqref{eq=perturbationA}, $\rm(H1)$ and $\rm(H2)$ fulfilled. Then, estimate \eqref{eq=locA}
is fulfilled with
\begin{equation}\label{lalalu}\left\{\begin{aligned}p(t)&=e^{-\delta t}\textrm{ for $n\geq3$ odd},\\  p(t)&= \frac{1}{(t+1)\ln^2(t+e)}\textrm{ for $n\geq4$ even}.\end{aligned}\right.\end{equation}
\end{Thm}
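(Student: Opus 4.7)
The plan is to reduce to the iterates $\mathcal U(T,0)^N$ via $T$-periodicity, write these iterates as a contour integral of the cut-off resolvent $R_{\phi_1,\phi_2}(\theta)$, and then extract the decay rate by deforming the contour using (H1), (H2), and the meromorphic continuation of Section~2.

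First I would split $t-s=NT+\tau$ with $\tau\in[0,T)$. Periodicity of $a$ and of $\Omega(t)$ identifies each one-period block with $\mathcal U(T,0)$, giving $\mathcal U(s+NT,s)=\mathcal U(T,0)^N$ under the canonical identification $H(s+kT)\simeq H(s)$, while $\mathcal U(t,s+NT)$ is bounded by $Be^{AT}$ via \eqref{energy}. Thus \eqref{eq=locA} reduces to proving $\|\chi\mathcal U(T,0)^N\chi\|\leq C\,p(NT)$. Finite speed of propagation combined with $\phi_i=1$ on $\{|x|\leq\rho+T+2\}$ lets one rewrite this as $\chi\phi_1\mathcal U(T,0)^N\phi_2\chi$, and the Neumann expansion of $R_{\phi_1,\phi_2}(\theta)$ in $\{\mathrm{Im}\,\theta>AT+\log B\}$ then yields the Dunford-type representation
\[
\chi\mathcal U(T,0)^N\chi=-\frac{1}{2\pi}\int_{\gamma_{A'}}e^{-i(N+1)\theta}\,\chi R_{\phi_1,\phi_2}(\theta)\chi\,d\theta,
\]
with $\gamma_{A'}=[0,2\pi]+iA'$, $A'>AT+\log B$. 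By $2\pi$-periodicity of the integrand in $\theta$, lateral boundary terms cancel and $\gamma_{A'}$ may be moved freely through any horizontal strip in which the resolvent is analytic.

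For odd $n\geq 3$, the meromorphic continuation of $R_{\phi_1,\phi_2}$ to $\mathbb C$ from Section~2, together with (H2) (absence of poles on $\{\mathrm{Im}\,\theta\geq 0\}$) and the discreteness of poles in a compact fundamental domain, yields a pole-free strip $\{-\delta\leq\mathrm{Im}\,\theta\leq 0\}$ of some width $\delta>0$, on which the resolvent is uniformly bounded by compactness. Shifting the contour down to $\mathrm{Im}\,\theta=-\delta$ then produces $\|\chi\mathcal U(T,0)^N\chi\|\leq Ce^{-\delta N}$, giving $p(t)=e^{-\delta t}$ after renaming $\delta$. For even $n\geq 4$ this descent is blocked by the branch cuts $2\pi\mathbb Z+i\R^-$ appearing in the Section~2 continuation, and (H2) restricts us to the closed upper half-plane minus $2\pi\mathbb Z$; I would deform the contour down to $\mathrm{Im}\,\theta=0$ with semicircular indentations of radius $\eta\to 0$ around each $2k\pi$ in the fundamental period. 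The $\limsup$ bound kills the indentation contributions as $\eta\to 0$, leaving a periodic oscillatory integral $\int_0^{2\pi}e^{-i(N+1)\theta}F(\theta)\,d\theta$ with $F=\chi R_{\phi_1,\phi_2}\chi$ bounded, smooth off $\theta\in 2\pi\mathbb Z$, and exhibiting the logarithmic boundary behaviour characteristic of the even-dimensional resolvent at zero energy. A dyadic decomposition around the singular point combined with integration by parts on the smooth pieces yields the Fourier-coefficient rate $1/(N\log^2 N)$, hence $p(t)=1/((t+1)\log^2(t+e))$.

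The main obstacle is the even-dimensional rate: turning the bare boundedness supplied by the $\limsup$ condition in (H2) into the sharp log-modulus-of-continuity estimate needed for the $1/(t\log^2 t)$ decay requires either a quantitative refinement of the boundary behaviour drawn from the (H1)-driven construction in Section~2 or a Tauberian-type argument for Fourier coefficients of bounded functions with isolated log-type irregularities. The odd-dimensional case, by contrast, is a standard Vainberg-type pole-free-strip argument once the contour representation is in place.
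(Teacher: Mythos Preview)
Your odd-dimensional argument is correct and matches the paper's Lemma~3 essentially verbatim: the inversion formula, the pole-free strip from (H2) plus the meromorphic continuation of Section~2, and the contour shift by $2\pi$-periodicity. One small slip in the reduction step: $\mathcal U(s+NT,s)$ equals $\mathcal U(s+T,s)^N$, not $\mathcal U(T,0)^N$, since the Floquet map depends on the base point $s\in\R/T\mathbb Z$. The paper fixes this by writing $t=t'+mT$, $s=s'+kT$ with $0\leq t',s'<T$ and using finite speed of propagation to factor $\psi_1\mathcal U(t,s)\psi_2=\psi_1\mathcal U(t',0)\phi_1\,\mathcal U((m-k)T,0)\,\phi_2\mathcal U(0,s')\psi_2$, reducing everything to the single operator $\mathcal U(T,0)$; you should do the same.

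The even-dimensional case is where your proposal has a genuine gap, and you have diagnosed it yourself. Boundedness of $R_{\phi_1,\phi_2}$ up to the real line (which is all (H2) gives directly) only yields Riemann--Lebesgue decay of the Fourier coefficients; no amount of dyadic decomposition or integration by parts will produce $1/(N\log^2 N)$ from a merely bounded integrand. The missing ingredient is precisely the structural expansion \eqref{th4a} from Theorem~3: under (H1) the resolvent near $\theta=0$ has the form $\sum_{k\geq -m}\sum_{j\geq -m_k}R_{kj}\theta^k(\log\theta)^{-j}$, and (H2) forces $R_{kj}=0$ for $k<0$ and for $k=0,\ j<0$, so that
\[
R_{\phi_1,\phi_2}(\theta)=A(\theta)+B\,\theta^{m_0}(\log\theta)^{-\mu}+o\bigl(\theta^{m_0}(\log\theta)^{-\mu}\bigr),\qquad m_0\geq 0,\ \mu\geq 1,
\]
with $A$ analytic near $0$ and $B$ of finite rank. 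This is exactly the ``quantitative refinement from Section~2'' you flagged, and it is not optional: it is what converts the qualitative boundedness into a computable singularity. The paper then deforms the contour \emph{below} the real axis to $\im\theta=-\delta T$ except on a small neighbourhood of $0$ where it detours via a half-circle in the upper half-plane (the contour $\Sigma$ in Lemma~4); the deep part contributes $e^{-\delta N}$ and the local part, carrying the explicit $\theta^{m_0}(\log\theta)^{-\mu}$ profile, is what yields $1/(N\log^2 N)$ after a direct estimate (the paper defers this computation to Lemmas~2--3 of \cite{Ki3}). Your real-axis-with-indentations contour does not isolate the singular contribution in the same way and would make the final estimate harder, not easier.
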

Let us remark that, assuming (H1) fulfilled, $\rm(H2)$ is a   necessary and sufficient condition for estimate \eqref{eq=locA} with $p(t)$ satisfying  \eqref{lalalu}. Moreover, if $\rm(H2)$ is not fulfilled, even  the uniform estimate in time of the local energy $\norm{\chi\mathcal{U}(t,0)\chi}_{\mathcal L(H(0),H(t))}$ may not hold. For example, if $R_{\phi_1,\phi_2}(\theta)$ has a pole $\theta_0\in\mathbb C$ with $\im(\theta_0)>0$, one can establish  the estimate
\[\norm{\chi\mathcal U(t,0)\chi}_{\mathcal L(H(0),H(t))}\geq Ce^{\frac{\im(\theta_0)}{T}t}\]
and deduce  existence of a solution with compactly supported initial data and exponentially growing local energy.
 It has been established in \cite{CPR} that these phenomenon can occur even with a non-trapping condition. The goal of $\rm(H2)$ is to avoid existence of such solutions.

\begin{rem}
Let the metric $(a_{ij}(t,x))_{1\leq i,j\leq n}$ be such that for all $i,j=1\cdots n$ we have
\[\begin{array}{l}
\displaystyle(i)\ \textrm{ there exists }\rho>0\textrm{ such that }a_{ij}(t,x)=\delta_{ij},\textrm{ for }\vert x\vert\geq\rho,\textrm{ with $\delta_{ij}=0$ for $i\neq j$ and $\delta_{ii}=1$},\\
\displaystyle(ii)\textrm{ there exists }T>0\textrm{ such that }a_{ij}(t+T,x)=a_{ij}(t,x),\  \forall (t,x)\in{\R}^{n+1},\\
\displaystyle(iii)a_{ij}(t,x)=a_{ji}(t,x),\forall (t,x)\in{\R}^{n+1},\\
\displaystyle(iv)\textrm{ there exist } C>c>0 \textrm{ such that }C\vert\xi\vert^2\geq\sum_{i,j=1}^na_{ij}(t,x)\xi_i\xi_j\geq c\vert\xi\vert^2 ,\ \ \forall(t,x)\in{\R}^{1+n},\ \xi\in{\R}^n.\\
\end{array}\]

If we replace $a(t,x)$ in \eqref{leprobleme} we get the following mixed problem
\begin{equation} \label{re}  \left\{\begin{aligned}
\displaystyle u_{tt}-\sum_{i,j=1}^n\frac{\partial}{\partial x_i}\left(a_{ij}(t,x)\frac{\partial}{\partial x_j}u\right)&=0,\ \ (t,x)\in\Omega,\\
u_{|\partial\Omega}&=0,\\
(u,u_{t})(s,x)=(f_{1}(x),f_{2}(x))&=f(x),\ \ x\in\Omega(s).\end{aligned}\right.\end{equation}
All the results of this paper  remain valid for the mixed problem \eqref{re} and their proofs follow from the same arguments.
\end{rem}

Notice that the estimate
\begin{equation} \label{a}\Vert\psi_1 \mathcal U(NT,0)\psi_2\Vert_{\mathcal L({H(0)})}\leq \frac{C_{\psi_1,\psi_2}}{(N+1)\ln^2(N+e)},\quad N\in\mathbb N,\end{equation}
implies (\ref{eq=locA}). On the other hand, if (\ref{a}) is valid, the assumption (H2) for $n$ even is fulfilled. Indeed, for large $A>>1$ and $ \textrm{Im}(\theta)\geq AT$ we have
\[R_{\psi_1,\psi_2}(\theta)=-e^{i\theta}\sum_{N=0}^{\infty}\psi_1\mathcal U(NT,0)\psi_2e^{iN\theta}\]
and applying (\ref{a}), we conclude that $R_{\psi_1,\psi_2}(\theta)$ admits an analytic continuation from\\
 $\{\theta\in\mathbb{C}\  :\   \textrm{Im}(\theta) \geq A > 0\}$   to $\{\theta\in\mathbb{C}\  :\   \textrm{Im}(\theta) >  0\}$. Moreover, $R_{\psi_1,\psi_2}(\theta)$ is bounded for $\theta\in{\R}$. In Section 4, we give some examples of metrics $a(t,x)$ and moving obstacle $O(t)$ such that  (\ref{a}) is fulfilled.  

\vspace{0,5cm}
\ \\

\section{General properties}
The purpose of this section is to establish some general properties of solutions of problem \eqref{leprobleme}. We will study the global well posedness of \eqref{leprobleme} and  we will prove estimate \eqref{energy}.  We start by fixing the notion of solutions of \eqref{leprobleme}.
\begin{Def}
A distribution $u(t,x)\in D'(\Omega)$ is called a solution of \eqref{leprobleme} if the following conditions hold:
\begin{enumerate}
\item[$\rm(i)$] $\left(u(t,.),u_t(t,.)\right)\in H(t)$ for each $t\in{\R}$; extended inside $O(t)$ by setting $u(t,x)=0$, the functions
\[t\longmapsto \nabla_xu(t,.),\quad t\longmapsto u_t(t,.)\]
are continuous with values in $L^2({\R}^n)$,
\item[$\rm(ii)$] $\left(u(s,.),u_t(s,.)\right)=(f_1,f_2)=f$
\item[$\rm(iii)$] $\partial_t^2u-\Div_x(a(t,x)\nabla_xu)=0$ in $\Omega$ in the sense of distributions.
\end{enumerate}\end{Def}
In the next result we obtain the existence and uniqueness of solutions of \eqref{leprobleme}.
\begin{Thm} Assume \eqref{obstacle1}, \eqref{obstacle2}, \eqref{normal} and \eqref{eq=perturbationA} fulfilled. Then, for each $f\in H(s)$ there exists a unique solution $u(t,.)$ of  \eqref{leprobleme} with the property that for each $t>0$ 
\begin{equation}\label{th2a} \sup_{\substack{\abs{t-s}\leq D \\ \abs{s}\leq2D}}\norm{\left(u(t,.),u_t(t,.)\right)}_{H(t)}\leq C_D\norm{f}_{H(s)}\end{equation}\end{Thm}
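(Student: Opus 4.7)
The approach is to reduce the moving-boundary problem to a linear hyperbolic PDE on the fixed spatial domain $\Omega(s)$ via a time-dependent change of variables, invoke standard existence theory there, and transfer the result back. Because $\partial\Omega$ is a smooth codimension-one submanifold of $\R^{1+n}$ and the non-characteristic condition \eqref{normal} forces $\abs{\nu_t}<\abs{\nu_x}$, the time function restricts regularly to $\partial\Omega$. Combined with \eqref{obstacle1}, which confines $O(t)$ to a fixed ball, this allows construction of a smooth family of diffeomorphisms $\Phi_t:\Omega(s)\to\Omega(t)$, jointly smooth in $(t,y)$, with $\Phi_s=\mathrm{id}$ and $\Phi_t=\mathrm{id}$ outside $\{\abs{y}\leq\rho_1+1\}$. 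Concretely, $\Phi_t$ arises as the flow of a smooth compactly supported time-dependent vector field transporting $\partial O(s)$ to $\partial O(t)$; its existence is a standard consequence of the tubular neighborhood theorem applied to $\partial\Omega$.

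\textbf{Existence, uniqueness and estimate.} Setting $v(t,y)=u(t,\Phi_t(y))$, the problem \eqref{leprobleme} becomes a second-order linear hyperbolic equation
\begin{equation*}
v_{tt}+B(t,y)\cdot\nabla_y v_t-\Div_y(\tilde a(t,y)\nabla_yv)+\text{l.o.t.}=0\quad\text{on}\quad\R\times\Omega(s),
\end{equation*}
with zero Dirichlet data on $\partial\Omega(s)$ and initial data $f$ at $t=s$. The coefficients $B,\tilde a$ are smooth in $(t,y)$, and strict hyperbolicity of the principal part is preserved thanks to \eqref{eq=perturbationA}(i) together with the uniform bounds on $D\Phi_t$ and $\partial_t\Phi_t$. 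Standard Lions--Magenes theory for hyperbolic PDE with smooth time-dependent coefficients on a fixed domain yields a unique $v\in C(\R;H^1_0(\Omega(s)))\cap C^1(\R;L^2(\Omega(s)))$ satisfying $\norm{v(t,\cdot)}_{H^1_0}+\norm{v_t(t,\cdot)}_{L^2}\leq C(D)\norm{f}_{H(s)}$ whenever $\abs{t-s}\leq D$ and $\abs{s}\leq 2D$. Pulling back via $\Phi_t^{-1}$ produces the solution $u$ of \eqref{leprobleme} in the class of Definition~1, and the chain rule, with $D\Phi_t, D\Phi_t^{-1}, \partial_t\Phi_t$ uniformly bounded on compact time intervals, transfers this estimate to \eqref{th2a}. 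Uniqueness on $\Omega(t)$ reduces to uniqueness on the fixed domain via the same correspondence.

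\textbf{Main obstacle.} The chief technical point is the construction of $\Phi_t$: it must be jointly smooth in $(t,y)$, a diffeomorphism for each $t$, and equal to the identity outside a fixed ball. The non-characteristic condition \eqref{normal} is exactly what is needed for the local flow-box construction near $\partial\Omega$, but globalizing this via a partition of unity on $\partial O(s)$ while keeping $\Phi_t$ trivial far from the obstacle, and then checking that the transformed coefficients $B,\tilde a$ retain uniform strict hyperbolicity, takes some care. An alternative would be to work directly on the moving domain using $u_t$ as a multiplier and to control the boundary flux through the identity $\nabla_{(t,x)}u=\mu\,\nu$ on $\partial\Omega$ combined with $a\abs{\nu_x}^2-\nu_t^2>0$ coming from \eqref{normal}; however, justifying the traces at the level of finite-energy weak solutions is more delicate, so I would favour the diffeomorphism route.
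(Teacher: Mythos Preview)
Your approach is correct but differs from the paper's. The paper works \emph{locally}: away from the boundary it invokes finite propagation speed to reduce to the free Cauchy problem, and near each point of $\partial\Omega(s)$ it introduces local coordinates in which the boundary is a graph $y_n=g(t,y')$, flattens it by the shift $x_n=y_n-g(t,y')$, and applies Miyatake's and H\"ormander's mixed-problem theory for second-order hyperbolic operators on a half-space; \eqref{normal} is used to show $|g_t|<c$, which makes the flattened boundary timelike and non-characteristic. A compactness argument patches these local solutions, and the global statement is obtained by iterating time-steps of uniform length $\epsilon$. Your route is \emph{global}: one diffeomorphism $\Phi_t:\Omega(s)\to\Omega(t)$ fixes the domain once and for all, and a single application of fixed-domain hyperbolic theory finishes the job without patching or time-stepping. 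Two small points: the existence of $\Phi_t$ for all $t$ (not just $|t-s|$ small) really needs Ehresmann's fibration theorem for the proper submersion $\partial\Omega\to\R_t$ induced by \eqref{normal}, not just the tubular neighborhood theorem---this is what rules out topology changes of $O(t)$; and the transformed equation carries a genuine cross term $B\cdot\nabla_y v_t$, so ``Lions--Magenes'' in the narrow sense (where the operator is $\partial_t^2+A(t)$ with $A$ elliptic) does not apply verbatim, though the standard energy argument (integrate $B\cdot\nabla v_t\,v_t$ by parts, use $v_t|_{\partial\Omega(s)}=0$) or the same Miyatake/H\"ormander references handle it without difficulty. What your approach buys is conceptual cleanness and a one-shot estimate; what the paper's buys is that it avoids the global topological input and stays closer to the form of the operator actually treated in \cite{Mi}.
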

\begin{proof}
First we treat the existence and uniqueness of the solution for small $\abs{t-s}$. Given $z\in\Omega(s)$, consider the cone 
\[C_{z,s}=\{ (t,x)\in{\R}^{1+n}\ :\ \abs{x-z}\leq\abs{t-s}\}.\]
For $\abs{t-s}$ small enough and for $z$ outside a small neighborhood of $\partial\Omega(s)$ we obtain $C_{z,s}\subset \Omega$. Consequently, for $(t,x)\in C_{z,s}$ the solution $u(t,x)$ of the mixed problem coincides with the solution of the Cauchy problem
\begin{equation} \label{eq=lepbA}  \left\{\begin{aligned}
u_{tt}-\Div_{x}(a(t,x)\nabla_{x}u)&=0,\ \ (t,x)\in{\R}\times{\R}^{n},\\
(u,u_{t})(s,x)=(f_{1}(x),f_{2}(x))&=f(x),\ \ x\in{\R}^n,\end{aligned}\right.\end{equation}
with $f$ extended by $0$ for $x\in O(s)$. Thus, for $\abs{t-s}\leq \epsilon$ and $\epsilon$ sufficiently small, we will determine $u(t,x)$ in some small neighborhood of $\partial \Omega\cap\{\abs{t-s}\leq\epsilon\}$. Given $(s,z)$ with $z\in\partial\Omega(s)$, we establish the existence and uniqueness of $u(t,x)$ in some space-time neighborhood of $(s,z)$. Covering the compact set $\{s\}\times\partial\Omega(s)$ by a finite number of such neighborhoods and using the local uniqueness result for the points where these neighborhoods overlap, we deduce the existence and uniqueness for small $\abs{t-s}$. Introduce in a neighborhood of $(s,z)$, $z\in\partial\Omega(s)$, local coordinates $(t,y)$, $y'=(y_1,\ldots,y_{n-1})$, so that $(s,z)$ is transformed into $(0,0)$, while the boundary $\partial\Omega$ is given by $y_n=g(t,y')$ with $g$ a $\mathcal C^\infty$ function such that $\nabla_{y'}g(0,0)=0$.  Since 
\[\nu(t,y',g(t,y'))=\frac{1}{\sqrt{1+\abs{g_t(t,y')}^2+\abs{\nabla_{y'}g(t,y')}^2}}(-g_t(t,y'),-\nabla_{y'}g(t,y'),1),\]
statement \eqref{normal} implies that 
\[\abs{g_t(t,y')}<c\left(\abs{\nabla_{y'}g(t,y')}+1\right).\] 
Thus, we have 
$\abs{g_t(0,0)}<c$.
If we choose a sufficiently small neighborhood of $(0,0)$ we can assume that $\abs{g_t(t,y')}<c$.
Changing variables
\[x_j=y_j,\quad j=1,\ldots,n-1,\quad x_n=y_n-g(t,y')\]
we transform 
\[\partial_t^2-\Div_x(a(t,x)\nabla_x\cdot)\]
into the operator $P(t,x,\partial_t,\partial_x)$ with principal symbol 
\[\begin{aligned}\sigma(P(t,x,\partial_t,\partial_x))=&-\tau^2 +2g_t\tau\xi_n-2\xi_nb(t,x)\xi'\cdot\nabla_{x'}g+ b(t,x)\abs{\xi'}^2\\ &+\left(b(t,x)\abs{\nabla_{x'}g}^2-g_t^2+b(t,x)\right)\xi_n^2,\end{aligned}\]
where $b(t,x)=a(t,y)$.
Here $(\tau,\xi',\xi_n)$ are the variable dual to $(t,x',x_n)$. Statement \eqref{normal} and property \eqref{eq=perturbationA} imply that 
\begin{equation}\label{th2b}b(t,x)\abs{\nabla_{x'}g}^2-g_t^2+b(t,x)>0.\end{equation}
Consider the problem 
\begin{equation}\label{th2c}\left\{\begin{aligned} P(t,x,\partial_t,\partial_x)u&=0 \ \textrm{ in }\ \R_t\times\R^{n-1}_{x'}\times\R^+_{x_n},\\ 
u(t,x',0)&=0 \ \textrm{ in }\ \R_t\times\R^{n-1}_{x'},\\ (u(0,x),u_t(0,x))&=f(x).\end{aligned}\right.\end{equation}
We suitably extend the coefficients of $P(t,x,\partial_t,\partial_x)$ to ${\R}^{1+n}$ preserving the strict hyperbolicity of $P(t,x,\partial_t,\partial_x)$ with respect to $t$. For the mixed problem \eqref{th2c} we can apply the results of Miyatake \cite{Mi} and H\"ormander \cite{H}, Chapter XXIV. Notice that the inequality \eqref{th2b} guarantees that the boundary $x_n=0$ is timelike in the sense of H\"ormander \cite{H}. The result of Miyatake \cite{Mi} says that if 
$\nabla_xf_1,\ f_2\in L^2_{\textrm{loc}}\left(\R^{n-1}_{x'}\times\overline{\R^+_{x_n}}\right)$ with $ f_1=f_2$ for $ x_n=0$,
then for $\abs{t}\leq\delta$ there exists a unique solution $u(t,x)\in H^1_{\textrm{loc}}\left(\R^{n-1}_{x'}\times\overline{\R^+_{x_n}}\right)$ of \eqref{th2c} satisfying the estimate
\[\sum_{j+\abs{\beta}}\norm{\partial^j_t\partial_x^{\beta}u(t,x)}_{L^2_{\textrm{loc}}\left(\R^{n-1}_{x'}\times\overline{\R^+_{x_n}}\right)}\leq C_\delta\sum_{j+\abs{\beta}}\norm{\partial^j_t\partial_x^{\beta}u(0,x)}_{L^2_{\textrm{loc}}\left(\R^{n-1}_{x'}\times\overline{\R^+_{x_n}}\right)}\]
with a constant $C_\delta$ depending on $\delta$. Notice that \eqref{eq=perturbationA} implies that the boundary $x_n=0$ is non-characteristic for $P(t,x,\partial_t,\partial_x)$. So $u(t,x)\in\mathcal C^\infty\left(\overline{\R^+_{x_n}}; D'(\R^n)\right)$ (see Theorem B.2.9 in H\"ormander \cite{H}) and the trace $u_{|x_n=0}$ is meaningful. The same argument shows that $\nabla_xu(t,.)$ and $u_t(t,.)$ depend continuously on $t$. Thus we obtain the existence and uniqueness of the solution of \eqref{leprobleme} in $\Omega\cap\{\abs{t-s}\leq\epsilon\}$. We can determine $\epsilon>0$ uniformly with respect to $s$, provided $\abs{s}\leq2D$. Making a construction by steps of length $\epsilon$, we cover the interval $\abs{t-s}\leq D$ and the proof is complete.
\end{proof}

Following Theorem 2, we can introduce the propagator $\mathcal U(t,s)$ defined by \eqref{pro}. Combining the results of Theorem 2 and the  periodicity of $O(t)$ and $a(t,x)$, we deduce the following.

\begin{prop}Assume \eqref{obstacle1}, \eqref{obstacle2}, \eqref{normal} and \eqref{eq=perturbationA} fulfilled. Then, we have
\begin{equation}\label{p1a}\mathcal U(t+T,s+T)=\mathcal U(t,s),\end{equation}
\begin{equation}\label{p1b}\norm{\U(t,s)}_{\mathcal L(H(s),H(t))}\leq Be^{A\abs{t-s}}.\end{equation}\end{prop}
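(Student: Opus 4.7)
My plan has two parts, one for each claim. For the periodicity \eqref{p1a}, I would use a direct time-translation argument combined with the uniqueness in Theorem 2. Given $f\in H(s)$, let $u$ be the solution of \eqref{leprobleme} produced by Theorem 2 with data $f$ at time $s$, and set $v(t,x):=u(t-T,x)$. Since \eqref{obstacle2} gives $(t,x)\in\Omega\Leftrightarrow(t-T,x)\in\Omega$, the function $v$ is well defined on $\Omega$, vanishes on $\partial\Omega$, and has Cauchy data $f$ at $t=s+T$. The $T$-periodicity \eqref{eq=perturbationA}(iii) of $a$ then yields
\[
\partial_t^2 v-\Div_x(a(t,x)\nabla_x v)=\bigl(\partial_t^2 u-\Div_x(a(t-T,x)\nabla_x u)\bigr)(t-T,x)=0\quad\text{in }\Omega,
\]
so $v$ solves \eqref{leprobleme} with datum $f$ at time $s+T$. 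Uniqueness gives $\mathcal U(t+T,s+T)f=(v(t+T),v_t(t+T))=(u(t),u_t(t))=\mathcal U(t,s)f$.

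For the exponential estimate \eqref{p1b}, I would combine the short-time bound \eqref{th2a} with the cocycle identity $\mathcal U(t,s)=\mathcal U(t,\tau)\mathcal U(\tau,s)$ (an immediate consequence of uniqueness) and the periodicity just proved. First reduce to $s\in[0,T]$ by writing $s=s_0+mT$ with $m\in\mathbb Z$ and applying \eqref{p1a} to move the base point to $s_0$; this does not change $\|\mathcal U(t,s)\|$. Then, for $t>s$, split $t-s=kT+r$ with $0\leq r<T$ and chain the cocycle:
\[
\mathcal U(t,s)=\mathcal U(t,s+kT)\,\mathcal U(s+kT,s+(k-1)T)\cdots\mathcal U(s+T,s).
\]
Applying \eqref{p1a}, each intermediate factor $\mathcal U(s+jT,s+(j-1)T)$ equals $\mathcal U(s+T,s)$ and the leading factor equals $\mathcal U(s+r,s)$. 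With $s\in[0,T]$ these base points lie in $[0,2T]$ and each time-interval has length $\leq T$, so \eqref{th2a} with $D=T$ bounds each of the $k+1$ factors by a single constant $C_T$. This yields
\[
\|\mathcal U(t,s)\|_{\mathcal L(H(s),H(t))}\leq C_T^{k+1}\leq B\,e^{A(t-s)},\qquad A=\tfrac{\log C_T}{T},\ B=C_T.
\]
For $t<s$ the same iteration runs backward in time, Theorem 2 providing the short-interval bound for either sign of $t-s$.

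I do not expect a serious obstacle. The only point that needs a moment's care is the cocycle identity on which the telescoping relies: it is equivalent to the assertion that $\mathcal U(\tau,s)f\in H(\tau)$, which is built into the definition \eqref{pro}, and that restarting the Cauchy problem from this state at time $\tau$ produces the same solution on $[\tau,\infty)$, which is just uniqueness in Theorem 2 applied on the overlap of the two solutions' domains. Once this is in hand, the two displayed conclusions follow by the routine combination sketched above.
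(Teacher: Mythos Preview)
Your proposal is correct and follows essentially the same route as the paper: the periodicity \eqref{p1a} is deduced from time-translation and uniqueness, and \eqref{p1b} is obtained by reducing the base point to one period via \eqref{p1a}, invoking the short-time bound \eqref{th2a} on a single period, and iterating through the cocycle identity. The only cosmetic difference is that the paper factorizes through powers of $\mathcal U(T,0)$ after pulling both endpoints into $[0,T)$, whereas you iterate $\mathcal U(s+T,s)$; both yield the same exponential bound with $A$ proportional to $\log C_T/T$.
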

\begin{proof} The proof of \eqref{p1a} is trivial. Let us show estimate \eqref{p1b}. Applying \eqref{th2a}, we obtain
\[\sup_{\abs{s},\abs{t}\leq T}\norm{\U(t,s)}_{\mathcal L(H(s),H(t))}=C<\infty.\]
Let $t,s\in\R$ and let $0\leq t',s'<T$ be such that $t=lT+t'$ and $s=kT+s'$ with $k,l\in\mathbb Z$. Then, applying \eqref{p1a}, we obtain 
\[\U(t,s)=\U(t',0)\U((k-l)T,0)\U(s',0)=\U(t',0)\U(T,0)^{k-l}\U(s',0).\]
It follows that
\[\norm{\U(t,s)}_{\mathcal L(H(s),H(t))}\leq C^2(1+C)^{\abs{k-l}}\leq C^2e^{\ln(1+C)\abs{k-l}}\leq C^2e^{\ln(1+C)\abs{t-s}}\]
and we obtain \eqref{p1b} with $A=\ln(1+C)$.\end{proof}

Notice that, combing the arguments used in the proof of Theorem 2 with  estimate \eqref{p1b},  we can show  that the Duhamel's principal holds. 
Let $P_1$ and $P_2$ be the projectors of $\mathbb C^2$ defined by \[  P_1(h)=h_1,\quad P_2(h)=h_2,\quad h=(h_1,h_2)\in\mathbb C^2\]
and let $P^1,P^2\in\mathcal L({\C},{\C}^2)$ be defined by
\[  P^1(h)=(h,0),\quad P^2(h)=(0,h),\quad h\in\mathbb C.\]
Denote by $V(t,s): L^2(\Omega(s))\to \dot{H}^1(\Omega(t))$ the operator defined  by
\[V(t,s)=P_1\mathcal U(t,s)P^2.\]
Notice that for $h\in L^2(\Omega(s))$, $w=V(t,s)h$ is the solution of

\[  \left\{\begin{aligned}
\partial_t^2(w)-\Div_{x}(a(t,x)\nabla_{x}w)&=0,\\
w_{|\partial\Omega}&=0,\\
(w,\partial_tw)_{\vert t=s}&=(0,h).\end{aligned}\right.\]
Let $g(t,x)$ be a function defined on $\Omega$ such that, for $A_1>A$ (with $A$ the constant of \eqref{p1b}), $e^{-A_1t}g(t,x)\in L^2(\Omega)$ and $g(t,x)=0$ for $\abs{x}\geq b$ with $b\geq\rho+1$. Then there exists a unique solution $v$ of 
\[  \left\{\begin{aligned}
\partial_t^2(v)-\Div_{x}(a(t,x)\nabla_{x}v)&=g(t,x),\\
v_{|\partial\Omega}&=0,\\
(v,\partial_tv)_{\vert t=s}&=(0,0).\end{aligned}\right.\]
Moreover, this solution can be written in the following way
\begin{equation}\label{Du}v(t,.)=\int_s^tV(t,\tau)g(\tau,.)\d \tau.\end{equation}

\section{The meromorphic continuation of the cut-off resolvent $R_{\psi_1,\psi_2}(\theta)$}
The goal of this section is to prove the meromorphic continuation of $R_{\psi_1,\psi_2}(\theta)$, assuming $\rm(H1)$ fulfilled. The main result of this section is the following.
\begin{Thm} Assume $\rm(H1)$, \eqref{obstacle1}, \eqref{obstacle2}, \eqref{normal} and \eqref{eq=perturbationA} fulfilled. Let $\psi_1,\ \psi_2\in{\CI}$. Then,   $R_{\psi_1,\psi_2}(\theta)$ admits a meromorphic continuation from $\{\theta\in\C\ :\ \im(\theta)>AT\}$ to ${\C}$ for $n\geq3$ odd and to $\C'=\{\theta\in\C\ :\ \theta\notin2\pi\mathbb{Z}+i\R^-\}$ for $n\geq4$ even. Moreover, for $n\geq4$ even, there exists $\epsilon_0>0$ such that, for $\abs{\theta}\leq\epsilon_0$, we have
\begin{equation}\label{th4a}R_{\psi_1,\psi_2}(\theta)=\sum_{k\geq-m}\sum_{j\geq-m_k}R_{kj}\theta^k(\log\theta)^{-j}.\end{equation}
Here $R_{k,j}\in\mathcal L(H(0))$ and, for $k<0$ or $j>0$, $R_{k,j}$ is a finite rank operator.\end{Thm}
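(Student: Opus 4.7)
The plan is to adapt Vainberg's method \cite{V2} to the Floquet setting. For $\im(\theta)>AT$, the Neumann series
\[
R_{\psi_1,\psi_2}(\theta) \;=\; -e^{i\theta}\sum_{N=0}^{\infty} \psi_1\, \mathcal{U}(NT,0)\,\psi_2\, e^{iN\theta}
\]
converges in $\mathcal L(H(0))$ by \eqref{p1a} and \eqref{p1b}. The first step is to exploit $\rm(H1)$ together with the finite speed of propagation built into Theorem~2 to establish a Floquet-level smoothing property: there exists $N_0\in\N$, depending only on $\mathrm{supp}\,\psi_1\cup\mathrm{supp}\,\psi_2$, such that for every $N\geq N_0$ the operator $\psi_1\mathcal U(NT,0)\psi_2$ maps $H(0)$ into a fixed space of smooth, compactly supported Cauchy data, and is in particular compact on $H(0)$.

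Next I would compare with the free propagator $\mathcal U_0(t,s)$ associated with $\partial_t^2-\Delta_x$ on $\R^{1+n}$. Gluing the free evolution outside $\{|x|\leq\rho\}$ to the perturbed one inside, and iterating the Duhamel formula \eqref{Du}, I would derive a functional identity
\[
(I-\mathcal K(\theta))\,R_{\psi_1,\psi_2}(\theta) \;=\; \mathcal M(\theta),
\]
where $\mathcal M(\theta)$ and $\mathcal K(\theta)$ are both built from the cut-off free Floquet resolvent $\psi_1(\mathcal U_0(T,0)-e^{-i\theta})^{-1}\psi_2$, and $\mathcal K(\theta)$ is compact-operator-valued thanks to the preceding smoothing. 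Via Fourier transform in time, this free Floquet resolvent reduces to the outgoing resolvent $(-\Delta-\lambda^2)^{-1}$, with $\lambda$ linked to $\theta$ by $e^{-iT\lambda}=e^{-i\theta}$; its classical continuation---entire on $\C$ for odd $n$, holomorphic on the logarithmic cover cut along $(-\infty,0]$ for even $n$---extends both $\mathcal M$ and $\mathcal K$ to $\C$, respectively to $\C'=\{\theta\in\C:\theta\notin 2\pi\mathbb{Z}+i\R^-\}$. The analytic Fredholm theorem applied to $(I-\mathcal K(\theta))^{-1}$ on these connected sets, starting from invertibility in the original half-plane $\im\theta>AT$, then upgrades this identity into the desired meromorphic continuation of $R_{\psi_1,\psi_2}(\theta)$.

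For the local expansion \eqref{th4a} at $\theta=0$ in even dimension, I would substitute into the same Fredholm identity the classical low-energy expansion of $(-\Delta-\lambda^2)^{-1}\chi$, which consists of a finite-rank principal part involving negative powers of $\log\lambda$ (and hence, after the substitution $\lambda\sim\theta/T$, negative powers of $\theta$ and of $\log\theta$) plus a holomorphic remainder with $\mathcal L(H(0))$-valued coefficients. Inverting $I-\mathcal K(\theta)$ at $\theta=0$ preserves this form: the Fredholm alternative forces every singular coefficient to be of finite rank, and only finitely many terms of negative degree in $\theta$ or of positive degree in $(\log\theta)^{-1}$ survive, producing exactly the expansion \eqref{th4a}. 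The main obstacle lies in the second step: because both $O(t)$ and $a(t,x)$ genuinely depend on $t$, there is no stationary self-adjoint operator to which spectral calculus can be applied, so the parametrix must be constructed at the level of the wave kernel; the compactness of $\mathcal K(\theta)$ has then to be verified by hand from the non-trapping smoothing, and the gluing with the free evolution must be arranged to be compatible with the moving boundary, for which \eqref{normal} is essential.
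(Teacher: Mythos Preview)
Your outline is a plausible high-level sketch of a Vainberg-style argument, but it diverges from the paper's route and leaves the crucial step underspecified.

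The paper does \emph{not} build a Fredholm identity for $R_{\psi_1,\psi_2}(\theta)$ directly. It quotes Vainberg's Theorem~10 from \cite{V2} as a black box: under (H1), well-posedness, Duhamel and \eqref{p1b}, that result furnishes an operator $R(t,s)$ agreeing with $V(t,s)=P_1\mathcal U(t,s)P^2$ for large $t-s$ and whose Fourier--Bloch--Gelfand transform $F'(\chi_1 R(t,s)\chi_2)(t,\theta)$ already has the meromorphic/property-$(S')$ structure. The paper's own contribution (Lemmas~1 and~2) is then purely combinatorial: deduce the continuation for $F'(\psi_1 V_1(t,s)\psi_2)(T,\theta)$ and its $t$-derivative; pass from $V$ to $U=P_1\mathcal U P^1$ via the Duhamel representation $U(t,0)=\int_0^{2T/3}V(t,s)[\partial_t^2,\alpha](s)U(s,0)\,\d s$; and assemble the four blocks into the $2\times2$ matrix $R_{\psi_1,\psi_2}(\theta)=-e^{i\theta}F'[\gamma_1\psi_1\mathcal U(t,0)\psi_2](T,\theta)$.

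You are instead proposing to reprove the content of \cite{V2} at the Floquet level. The step where this runs into trouble is the claimed identity $(I-\mathcal K(\theta))R_{\psi_1,\psi_2}(\theta)=\mathcal M(\theta)$: ``gluing the free evolution outside to the perturbed one inside'' does not immediately yield such an equation, because the obstacle moves and so $\mathcal U(T,0)$ and $\mathcal U_0(T,0)$ do not even act on the same Hilbert space, which blocks the naive resolvent comparison. Moreover, the cut-off free Floquet resolvent is not a single outgoing Helmholtz resolvent at one $\lambda$ with $e^{-iT\lambda}=e^{-i\theta}$; it involves all branches $\lambda_k=(\theta+2k\pi)/T$ at once, so the branch structure and the expansion near $\theta=0$ do not come for free from the stationary theory. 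Vainberg's actual mechanism is a time-domain parametrix for $V(t,s)$ built from (H1), with the Fourier--Bloch--Gelfand transform applied afterwards and the Fredholm step hidden inside that construction. If you want a self-contained argument you will have to reproduce that parametrix rather than a resolvent comparison; otherwise, following the paper and citing \cite{V2} is the efficient path.
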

To prove Theorem 4, we will use some results of  \cite{V2} and \cite{Ki3}. For this purpose, we introduce some tools and  definitions of \cite{V2}.

Let $\gamma\in\mathcal C^\infty(\R)$ be such that $\gamma(t)=1$ for $t\geq -\frac{2T}{3}-\frac{T}{10}$ and $\gamma(t)=0$ for $t\leq -\frac{2T}{3}-\frac{2T}{10}$. Set
\[V_1(t,s)=\gamma(t-s)V(t,s).\]
We recall that the Fourier-Bloch-Gelfand transform $F$ is defined by 
\[F(\phi)(t,\theta)=\sum_{k=-\infty}^{+\infty}\left(\phi(t+kT,\cdot)e^{ik\theta}\right),\quad \phi\in\mathcal C^\infty_0(\R\times\R^n).\]
Applying \eqref{p1b}, for $\im(\theta)>AT$, with $A>0$ the constant of \eqref{p1b}, we can define 
\[F(\chi_1V_1(t,s)\chi_2)(t,\theta)=\sum_{k=-\infty}^{+\infty}\left(\chi_1V_1(t+kT,s)\chi_2e^{ik\theta}\right),\quad\chi_1,\ \chi_2\in{\CI}\]
and 
\[F'(\chi_1V_1(t,s)\chi_2)(t,\theta)=e^{i\frac{t\theta}{T}}F(\chi_1V_1(t,s)\chi_2)(t,\theta),\quad\chi_1,\ \chi_2\in{\CI}.\]
We will use the following definition of meromorphic continuation of a family of bounded operators.
\begin{defi}Let $H_1$ and $H_2$ be Hilbert spaces. A family of bounded operators
 $Q(t,s,\theta):H_1\rightarrow H_2$ is said to be meromorphic with respect to $\theta$ in a domain $D\subset\mathbb C$, if $Q(t,s,\theta)$ is meromorphically dependent on $\theta$ for $\theta\in D$ and for any pole  $\theta=\theta_0$  the coefficients of the negative powers of  $\theta-\theta_0$ in the appropriate Laurent extension are finite-rank operators. \end{defi}
 Denote $\mathbb C'=\{ z\in\mathbb C\ :\ z\neq 2k\pi-i\mu,\ k\in\mathbb{Z},\ \mu\geq0\}$ and consider the following meromorphic continuation.
\begin{defi} We say that the family of operators $Q(t,s,\theta)$, which are $\mathcal C^\infty$ with respect to $t$ and $s$, for $t\in{\R}$ and $0\leq s\leq\frac{2T}{3}$, and $T$-periodic with respect to $t$, has the property $(S')$ if:
1) for odd $n$ the operators $Q(t,s,\theta),\ \theta\in\mathbb{C},$ and its derivatives with respect to $t$ form a finitely-meromorphic family; 2) For even $n$ the operators $Q(t,s,\theta)$ and its derivatives with respect to $t$ form a finitely-meromorphic family for $\theta\in\mathbb{C}'$ . Moreover, in a neighborhood of $\theta=0$ in $\mathbb C'$, $Q(t,s,\theta)$ has the form
\begin{equation}\label{eq=VG}Q(t,s,\theta)=\theta^{-m}\sum_{j\geq0}\left(\frac{\theta}{R_{t,s}(\log \theta)}\right)^jP_{j,t,s}(\log \theta)+C(t,s,\theta),\end{equation}
where $C(t,s,\theta)$ is analytic with respect to $\theta$, $R_{t,s}$ is a polynomial, the $P_{j,t,s}$ are polynomials of order at most $l_j$ and $\log$ is the logarithm defined on $\mathbb{C}\setminus i{\R}^-$. Moreover, $C(t,s,\theta)$ and the coefficients of  the polynomials $R_{t,s}$ and $P_{j,t,s}$ are $\mathcal C^\infty$ and $T$-periodic with respect to $t$ and $\mathcal C^\infty$ with respect to $s$ for $0\leq s\leq\frac{2T}{3}$.\end{defi}
\begin{rem} Notice that if $Q(t,s,\theta)$ satisfies $(S')$ then $\partial_tQ(t,s,\theta)$ satisfies also $(S')$.\end{rem}

In \cite{V2} Vainberg proposed a general approach to problems with time-periodic perturbations including potentials, moving obstacles and high order operators, provided that the perturbations are non-trapping. One of the main results of \cite{V2} is the following.
\begin{Thm}\emph{(Theorem 10, \cite{V2})}
Assume that the mixed problem \eqref{leprobleme} is well posed, the Duhamel's principal holds and let \eqref{p1b} and $\rm(H1)$ be fulfilled. Then, for all  $b\geq\rho+1$, there exists $T_2(b)>T_1(b)$ and an operator 
\[R(t,s):\ L^2(\Omega(s))\to \dot{H}^1(\Omega(t))\]
such that the following conditions are fulfilled:
\begin{enumerate}
\item[$\rm(i)$] $R(t+T,s+T)=R(t,s)$,
\item[$\rm(ii)$] $R(t,s)$  is bounded,
\item[$\rm(iii)$] for all $\chi_1,\ \chi_2\in\CII$, $F'(\chi_1R(t,s)\chi_2)(t,\theta)$ admits a meromorphic continuation to the lower half plane satisfying property $(S')$ and  $\chi_1R(t,s)\chi_2=\chi_1V(t,s)\chi_2$ for $t-s\geq T_2(b)$.
\end{enumerate} 
\end{Thm}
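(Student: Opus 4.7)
The statement is Vainberg's Theorem 10 specialized to the present mixed problem, asserting the existence of an auxiliary operator $R(t,s)$ that coincides with $V(t,s)$ for large $t-s$ and whose Fourier-Bloch-Gelfand transform admits a good meromorphic structure. My plan is to follow the constructive approach of \cite{V2}, combining three ingredients: the non-trapping hypothesis $\rm(H1)$, the fact that $a(t,x)=1$ and $\partial\Omega$ is absent outside $\abs{x}\geq\rho$, and the explicit meromorphic structure of the free outgoing resolvent of $-\Delta$ on $\R^n$.

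First, I would construct $R(t,s)$ by surgery on $V(t,s)$. Fix $b\geq\rho+1$ and choose $T_2(b)>T_1(b)$. For $h\in L^2(\Omega(s))$ supported in $\abs{x}\leq b$, finite speed of propagation confines $V(t,s)h$ to $\abs{x}\leq b+(t-s)$, and $\rm(H1)$ gives smoothness of its restriction to $\abs{x}\leq b$ once $t-s\geq T_1(b)$; outside $\abs{x}\geq\rho$ the function $V(t,s)h$ satisfies the free wave equation. Exploiting these two facts, one splices $V(t,s)$ with a smooth $T$-periodic (in both variables) extension: for $t-s\geq T_2(b)$ set $R(t,s)=V(t,s)$, and for $0\leq t-s<T_2(b)$ define $R(t,s)$ as a smooth interpolation that respects the Dirichlet condition on the moving boundary and makes the composite $T$-periodic in $(t,s)$. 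Condition (i) is then built in, and (ii) follows from \eqref{p1b} and the local well-posedness in Theorem 2.

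For property (iii), apply $F'$ to $\chi_1 R(t,s)\chi_2$ with $\chi_1,\chi_2\in\CII$. For $\im\theta>AT$ the series
\[F'(\chi_1 R(t,s)\chi_2)(t,\theta)=e^{it\theta/T}\sum_{k\in\mathbb Z}\chi_1 R(t+kT,s)\chi_2 \, e^{ik\theta}\]
converges absolutely by \eqref{p1b}. Because $\chi_1 R(t+kT,s)\chi_2=\chi_1 V(t+kT,s)\chi_2$ for large $k$, and because on this range the wave transported by $V$ leaves $\abs{x}\leq\rho$ and thereafter obeys the free equation, the tail of the series can be recognized, modulo an entire-in-$\theta$ remainder, as a cut-off of the free outgoing resolvent $\chi_1(-\Delta-(\theta/T)^2)^{-1}_{\mathrm{out}}\chi_2$. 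Meromorphic continuation in $\theta$ then reduces to that of the classical cut-off resolvent $\chi(-\Delta-\lambda^2)^{-1}\chi$: entire on $\C$ for $n$ odd; meromorphic on the logarithmic cover $\C\setminus i\R^-$ for $n$ even, with a branch point at $\lambda=0$ coming from the Hankel function $H^{(1)}_{(n-2)/2}$. The small-$\lambda$ expansion of the latter supplies the polynomial-in-$\log\theta$ structure \eqref{eq=VG}, and the finite-rank property of the singular coefficients follows because they factor through the compactly supported cutoffs $\chi_1,\chi_2$ and project onto a finite number of generalized modes associated to each residue. The $2\pi$-periodicity of the series in $\theta$ replicates the branch at $0$ into branches at every $2\pi k$, yielding the domain $\mathbb C'$.

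The principal obstacle is the surgery step: producing a $T$-periodic extension of $V(t,s)$ that respects the Dirichlet condition on the moving obstacle and splices smoothly with $V(t,s)$ at $t-s=T_2(b)$. The non-characteristic condition \eqref{normal} together with the local existence argument from Theorem 2 gives the needed flexibility, while $\rm(H1)$ guarantees that the affected region of the kernel is already smooth, so no new singularities are introduced by the modification. The remaining work is careful bookkeeping along the lines of \cite{V2} and the refinements in \cite{Ki3}.
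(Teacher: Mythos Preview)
The paper does not prove this theorem. It is quoted as Theorem~10 of Vainberg \cite{V2}, and the text immediately following the statement reads: ``In \cite{V2} Vainberg established the result of Theorem 4 for $s=0$. In \cite{Ki3} it has been proven that this result can be generalized to $0\leq s\leq\frac{2T}{3}$.'' No argument or sketch is supplied; the result is invoked as a black box in the proof of Lemma~1. So there is no ``paper's own proof'' to compare against.

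As an outline of Vainberg's strategy your sketch is in the right spirit --- exploit $\rm(H1)$ for interior smoothness, the free equation for $\abs{x}\geq\rho$, and the known meromorphic structure of the cut-off free resolvent --- but it is not a proof at the two decisive points. The ``surgery'' you describe is not how $R(t,s)$ is built: in \cite{V2} one constructs an explicit parametrix for the exterior problem out of the free propagator and cutoff functions, and the equality $\chi_1 R(t,s)\chi_2=\chi_1 V(t,s)\chi_2$ for $t-s\geq T_2(b)$ is a \emph{consequence} of the parametrix error being smoothing combined with $\rm(H1)$, not something imposed by interpolation. Similarly, the sentence ``the tail of the series can be recognized, modulo an entire-in-$\theta$ remainder, as a cut-off of the free outgoing resolvent'' is precisely the computation that carries the proof, and you have not done it; one must actually evaluate $F'$ on the parametrix and identify it with the resolvent kernel. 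Finally, the finite-rank property of the singular coefficients in \eqref{eq=VG} does not follow merely from compact support of $\chi_1,\chi_2$: it comes from the explicit small-$\lambda$ expansion of the free resolvent, whose singular part has a Schwartz kernel polynomial in the spatial variables.
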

In \cite{V2} Vainberg established the result of Theorem 4 for $s=0$. In \cite{Ki3} it has been proven that this result can be generalized to $0\leq s\leq\frac{2T}{3}$. Combining these results with the properties established in Section 1, we obtain a  meromorphic continuation of the Fourier-Bloch-Gelfand transform of   the solutions of \eqref{leprobleme} with initial data $(0,g)$ and their derivatives of order $1$ with respect to $t$.
\begin{lem} Assume $\rm(H1)$, \eqref{obstacle1}, \eqref{obstacle2}, \eqref{normal} and \eqref{eq=perturbationA} fulfilled. Then, for all $\psi_1,\ \psi_2\in{\CI}$ and all $0\leq s\leq\frac{2T}{3}$,
\[F'(\psi_1V_1(t,s)\psi_2)(T,\theta)\quad\textrm{and}\quad F'(\psi_1\partial_tV_1(t,s)\psi_2)(T,\theta)\]
admit a meromorphic continuation with respect to $\theta$, continuous with respect to $s\in\left[0,\frac{2T}{3}\right]$,  from $\{\theta\in\C\ :\ \im(\theta)>AT\}$ to ${\C}$ for $n\geq3$ odd and to $\C'=\{\theta\in\C\ :\ \theta\notin2\pi\mathbb{Z}+i\R^-\}$ for $n\geq4$ even. Moreover, for $n\geq4$ even, there exists $\epsilon_0>0$ such that, for $\abs{\theta}\leq\epsilon_0$, we have
\begin{equation}\label{l1a}F'(\psi_1V_1(t,s)\psi_2)(T,\theta)=\sum_{k\geq-m}\sum_{j\geq-m_k}Q_{kj}(s)\theta^k(\log\theta)^{-j}.\end{equation}
\begin{equation}\label{l1b}F'(\psi_1\partial_tV_1(t,s)\psi_2)(T,\theta)=\sum_{k\geq-m}\sum_{j\geq-m_k}S_{kj}(s)\theta^k(\log\theta)^{-j}.\end{equation}
Here $Q_{kj}(s),\ S_{kj}(s)\in\mathcal L(H(s),H(0))$ and are continuous with respect to $s$ for $0\leq s\leq\frac{2T}{3}$.\end{lem}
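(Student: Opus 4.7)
The plan is to deduce the lemma from Theorem 4 (in the generalization to $0 \leq s \leq \tfrac{2T}{3}$ obtained in \cite{Ki3}) by comparing the family $\psi_1 V_1(t,s) \psi_2$ with the modified family $\psi_1 R(t,s) \psi_2$. Since $\psi_1, \psi_2 \in \CI$ have compact support, fix $b \geq \rho + 1$ with $\psi_1, \psi_2 \in \CII$ and invoke Theorem 4: the operator $R(t,s)$ is bounded, agrees with $V(t,s)$ for $t - s \geq T_2(b)$, and $F'(\psi_1 R(t,s) \psi_2)(t, \theta)$ admits a meromorphic continuation satisfying property $(S')$.

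For $\im(\theta) > AT$ the series defining $F(\psi_1 V_1(t,s) \psi_2)(t, \theta)$ converges by \eqref{p1b}, while the lower cutoff $\gamma$ kills the terms with $t + kT - s \leq -\tfrac{2T}{3} - \tfrac{T}{5}$. Choosing $k_+ = k_+(t,s)$ so that both $\gamma(t + kT - s) = 1$ and $V(t + kT, s) = R(t + kT, s)$ for all $k \geq k_+$, one splits
\[
F'(\psi_1 V_1(t,s) \psi_2)(t, \theta) = F'(\psi_1 R(t,s) \psi_2)(t, \theta) + E(t, s, \theta),
\]
where $E(t, s, \theta)$ is a finite sum over the bounded range $k_- \leq k < k_+$ of terms with bounded operator coefficients (by Proposition 1 and the boundedness of $R(t,s)$), hence a trigonometric polynomial in $e^{i\theta}$ with operator values, entire in $\theta$. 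Evaluating at $t = T$ and applying Theorem 4(iii) yields the claimed meromorphic continuation of $F'(\psi_1 V_1(T, s)\psi_2)$ to $\mathbb{C}$ for odd $n$ and to $\mathbb{C}'$ for even $n$. For even $n$, substituting $t = T$ into the local representation \eqref{eq=VG} and expanding $R_{T, s}(\log\theta)^{-j}$ as $\theta \to 0$ produces a double series in $\theta^k (\log\theta)^{-j}$ with $k \geq -m$ and $j$ running in a finite range of the form $j \geq -m_k$; this is the expansion \eqref{l1a}.

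For the $\partial_t$ statement, differentiate $V_1(t,s) = \gamma(t-s) V(t,s)$ to obtain
\[
\partial_t V_1(t,s) = \gamma'(t-s) V(t,s) + \gamma(t-s) \partial_t V(t,s).
\]
The first summand is supported in $t - s \in [-\tfrac{2T}{3} - \tfrac{T}{5}, -\tfrac{2T}{3} - \tfrac{T}{10}]$, so its FBG transform involves only finitely many $k$ and is entire in $\theta$. For the second, Remark 2 ensures that $F'(\psi_1 \partial_t R(t,s) \psi_2)$ inherits property $(S')$ from $F'(\psi_1 R(t,s) \psi_2)$; repeating the splitting above with $V$, $R$ replaced by $\partial_t V$, $\partial_t R$ gives \eqref{l1b}. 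Continuity of the coefficients in $s$ on $[0, \tfrac{2T}{3}]$ follows because property $(S')$ guarantees that the polynomial coefficients in \eqref{eq=VG} and the analytic remainder $C(t, s, \theta)$ are $\mathcal C^\infty$ in $s$ on this interval, while the finite correction $E(T, s, \theta)$ depends continuously on $s$ by Theorem 2.

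The main obstacle is essentially the input from \cite{Ki3}: verifying that the generalization of Theorem 4 to $s$ varying continuously over $[0, \tfrac{2T}{3}]$ produces a $\theta$-meromorphic family whose $(S')$-data depend continuously on $s$. This is where the non-trapping hypothesis $\rm(H1)$ really does the work, through Theorem 4. Once this is granted, the rest is algebraic manipulation --- the splitting into an $R$-part plus a finite entire correction, and the readout of the $\theta^k (\log\theta)^{-j}$ expansion from \eqref{eq=VG}.
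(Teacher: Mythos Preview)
Your argument is correct and follows essentially the same strategy as the paper's own proof: split $F'(\psi_1 V_1 \psi_2)$ into the transform of Vainberg's parametrix $R$ (which carries property $(S')$ by Theorem~4) plus a finite, hence entire, correction. The only cosmetic differences are that the paper implements the split via an explicit time cutoff $h_b(t)$ supported in $\{t\geq T_2(b)+T\}$ so that $h_b\psi_1 V_1\psi_2 = h_b\psi_1 R\psi_2$, and for the $\partial_t$ statement relates $F'(h_b\psi_1\partial_tV_1\psi_2)$ to $\partial_t F'(h_b\psi_1 V_1\psi_2)$ via the identity \eqref{l1d} rather than passing to $\partial_t R$ directly as you do; both routes rest on the stability of $(S')$ under $\partial_t$ and multiplication by $\theta$.
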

\begin{proof}
According to  Section 1, the mixed problem \eqref{leprobleme} is well posed, the Duhamel's principal holds, and  \eqref{p1b}, \eqref{Du} are fulfilled. Thus, we can apply the results of Theorem 4.
Choose $b\geq\rho+1$  such that supp$\psi_1\cup$supp$\psi_2\subset\{x\ :\ \abs{x}\leq b\}$. Take $h_b\in\mathcal C^\infty(\R)$ such that $h_b(t)=1$ for $t\geq T_2(b)+\frac{6T}{5}$ and $h_b(t)=0$ for $t\leq T_2(b)+T$. Then, for all $0\leq s\leq\frac{2T}{3}$, statement (iii) of Theorem 4 implies
\[h_b(t)\psi_1V_1(t,s)\psi_2=h_b(t)\psi_1R(t,s)\psi_2.\]
Thus, $F'(h_b(t)\psi_1V_1(t,s)\psi_2)(t,\theta)$ admits a meromorphic continuation satisfying property $(S')$. From now on, we assume that $T_2(b)=k_0T$ with $k_0\in\mathbb N$. For $\im(\theta)>AT$ we have 
\begin{equation}\label{l1c}F'(\psi_1V_1(t,s)\psi_2)(T,\theta)=F'(h_b(t)\psi_1V_1(t,s)\psi_2)(T,\theta)+F'[(1-h_b(t))\psi_1V_1(t,s)\psi_2](T,\theta).\end{equation}
Since $1-h_b(t)=0$ for $t\geq T_2(b)+\frac{6T}{5}=(k_0+1)T+\frac{T}{5}$, for $\im(\theta)>AT$, we get
\[F'[(1-h_b(t))\psi_1V_1(t,s)\psi_2](T,\theta)=e^{i\theta}\left[\sum_{k=1}^{k_0+1}(\psi_1V(kT,s)\psi_2e^{ik\theta})+\gamma(-s)\psi_1V(0,s)\psi_2\right].\]
Thus, $F'[(1-h_b(t))\psi_1V_1(t,s)\psi_2](T,\theta)$ admits an analytic continuation to $\C$. Combining the meromorphic continuation of $F'(h_b(t)\psi_1V_1(t,s)\psi_2)(T,\theta)$, the analytic continuation of\\
 $F'[(1-h_b(t))\psi_1V_1(t,s)\psi_2](T,\theta)$ and representation \eqref{l1c}, we obtain  the  meromorphic continuation of  $F'(\psi_1V_1(t,s)\psi_2)(T,\theta)$. It remains to prove the meromorphic continuation of
  $F'(\psi_1\partial_tV_1(t,s)\psi_2)(T,\theta)$. Notice that 
\[\partial_tV(t,s)=P_2\U(t,s)P^2\]
and, for $\im(\theta)>AT$, $F'(\psi_1V_1(t,s))(t,\theta)$ is well defined.
For $\im(\theta)>AT$, we have
\[\begin{aligned}\partial_t\left[F'(h_b(t)\psi_1V_1(t,s)\psi_2)(t,\theta)\right]=&\frac{i\theta}{T}F'(h_b(t)\psi_1V_1(t,s)\psi_2)(t,\theta)+F'(h_b'(t)\psi_1V_1(t,s)\psi_2)(t,\theta)\\
&+F'(h_b(t)\psi_1\partial_tV_1(t,s))(t,\theta)\end{aligned}\]
Thus, for $\im(\theta)>AT$, we get
\begin{equation}\label{l1d}\begin{aligned}F'(h_b(t)\psi_1\partial_tV_1(t,s)\psi_2)(t,\theta)=&\partial_t\left[F'(h_b(t)\psi_1V_1(t,s)\psi_2)(t,\theta)\right]-F'(h_b'(t)\psi_1V_1(t,s)\psi_2)(t,\theta)\\
&-\frac{i\theta}{T}F'(h_b(t)\psi_1V_1(t,s)\psi_2)(t,\theta)\end{aligned}\end{equation}
Since $F'(h_b(t)\psi_1V_1(t,s)\psi_2)(t,\theta)$ admits a meromorphic continuation satisfying property $(S')$ 
\[\partial_t\left[F'(h_b(t)\psi_1V_1(t,s)\psi_2)(t,\theta)\right]\quad\textrm{and}\quad \frac{i\theta}{T}F'(h_b(t)\psi_1V_1(t,s)\psi_2)(t,\theta)\]
admit also a meromorphic continuation satisfying property $(S')$. Moreover, since $h_b'(t)=0$ for $t\geq T_2(b)+\frac{6T}{5}$, $F'(h_b'(t)\psi_1V_1(t,s)\psi_2)(t,\theta)$ admits an analytic continuation with respect to $\theta$. It follows that $F'(h_b(t)\psi_1\partial_tV_1(t,s)\psi_2)(t,\theta)$ admits a meromorphic continuation satisfying property $(S')$. We conclude by repeating the arguments used for proving the meromorphic continuation of $F'(\psi_1V_1(t,s)\psi_2)(T,\theta)$.\end{proof}
Consider the operator defined by
 \[U(t,s)=P_1\mathcal U(t,s)P^1.\] For all $h\in\dot{H}^1(\Omega(s))$, $w=U(t,s)h$ is the solution of
\[  \left\{\begin{aligned}
\partial_t^2w-\Div_{x}(a(t,x)\nabla_{x}w)&=0,\\
w_{|\partial\Omega}&=0,\\
(w,w_t)_{\vert t=s}&=(h,0).\end{aligned}\right.\]
Let $\gamma_1\in\mathcal C^\infty(\R)$ be such that $\gamma_1(t)=1$ for $t\geq-\frac{T}{20}$ and $\gamma_1(t)=0$ for $t\leq-\frac{T}{10}$. Set 
\[U_1(t,s)=\gamma_1(t-s)U(t,s).\]
Applying \eqref{p1b}, for $\im(\theta)>AT$ and $\psi_1\ \psi_2\in\CI$ we can define $F'(\psi_1U_1(t,s)\psi_2)(t,\theta)$ and $F'(\psi_1\partial_tU_1(t,s)\psi_2)(t,\theta)$. From the results of Lemma 1 we obtain the following meromorphic continuation of $F'(\psi_1U_1(t,s)\psi_2)(T,\theta)$ and $F'(\psi_1\partial_tU_1(t,s)\psi_2)(T,\theta)$.
\begin{lem} Assume $\rm(H1)$, \eqref{obstacle1}, \eqref{obstacle2}, \eqref{normal} and \eqref{eq=perturbationA} fulfilled. Then, for all $\psi_1,\ \psi_2\in{\CI}$,
\[F'(\psi_1U_1(t,s)\psi_2)(T,\theta)\quad\textrm{and}\quad F'(\psi_1\partial_tU_1(t,s)\psi_2)(T,\theta)\]
admit a meromorphic continuation with respect to $\theta$, continuous with respect to $s\in\left[0,\frac{2T}{3}\right]$,  from $\{\theta\in\C\ :\ \im(\theta)>AT\}$ to ${\C}$ for $n\geq3$ odd and to $\C'$ for $n\geq4$ even. Moreover, for $n\geq4$ even, there exists $\epsilon_0>0$ such that, for $\abs{\theta}\leq\epsilon_0$, we have
\begin{equation}\label{l2a}F'(\psi_1U_1(t,0)\psi_2)(T,\theta)=\sum_{k\geq-m}\sum_{j\geq-m_k}M_{kj}\theta^k(\log\theta)^{-j}.\end{equation}
\begin{equation}\label{l2b}F'(\psi_1\partial_tU_1(t,0)\psi_2)(T,\theta)=\sum_{k\geq-m}\sum_{j\geq-m_k}N_{kj}\theta^k(\log\theta)^{-j}.\end{equation}
Here $M_{kj},\ N_{kj}\in\mathcal L(H(0))$ and, for $k<0$ or $j>0$, $M_{kj},\ N_{kj}$ are a finite rank operator.\end{lem}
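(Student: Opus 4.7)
The plan is to reduce Lemma 2 to Lemma 1 through the classical identity $U(t,s) = -\partial_s V(t,s)$, after a careful handling of the temporal cutoffs. To establish this identity, I would fix $h$ and set $v(t;s) = V(t,s)h$; since the wave operator $\partial_t^2 - \Div_x(a(t,x)\nabla_x\cdot)$ does not involve $s$, both $v(\cdot;s)$ and $w:=\partial_s v(\cdot;s)$ solve the homogeneous equation in $\Omega$. Differentiating the identities $v(s;s)=0$ and $\partial_t v(s;s)=h$ totally in $s$, and using $\partial_t^2 v(s;s)=\Div_x(a(s,\cdot)\nabla_x v(s;s))=0$, shows that $w$ has initial data $(-h,0)$ at time $s$; uniqueness then yields $w=-U(\cdot,s)h$.

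Next I would translate this into an identity between $U_1$ and $V_1$. Inspection of the cutoff intervals shows that $\gamma(t-s)\equiv 1$ on $\mathrm{supp}\,\gamma_1(t-s)$, hence $\gamma_1(t-s)\gamma'(t-s)\equiv 0$. Differentiating $V_1=\gamma(t-s)V(t,s)$ in $s$ and multiplying by $\gamma_1(t-s)$ yields $\gamma_1\partial_s V_1 = -\gamma_1\partial_s V = \gamma_1 U = U_1$, which combined with the Leibniz formula $\gamma_1\partial_s V_1 = \partial_s(\gamma_1 V_1) + \gamma_1'(t-s)V_1$ gives the key identity
\[U_1(t,s) = \partial_s\bigl(\gamma_1(t-s)V_1(t,s)\bigr) + \gamma_1'(t-s)V_1(t,s).\]

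I would then apply $F'(\psi_1\cdot\psi_2)(T,\theta)$ to both sides and interchange $\partial_s$ with the Bloch-Gelfand series. The term coming from $\gamma_1'$ reduces to a finite sum in the Bloch index, since $\gamma_1'$ is compactly supported, and is therefore entire in $\theta$. The other term, $\partial_s F'(\psi_1\gamma_1(t-s)V_1(t,s)\psi_2)(T,\theta)$, differs from $\partial_s F'(\psi_1 V_1(t,s)\psi_2)(T,\theta)$ by a finite analytic sum (collecting the indices where $\gamma_1\neq 1$), so it inherits the meromorphic continuation supplied by Lemma 1; the operation $\partial_s$ preserves pole locations and the logarithmic structure near $\theta=0$, and evaluating at $s=0$ produces \eqref{l2a}. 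Running the same argument with $\partial_t V_1$ in place of $V_1$, using $\partial_t\partial_s=\partial_s\partial_t$ to get $\partial_t U=-\partial_s\partial_t V$, and invoking the continuation of $F'(\psi_1\partial_t V_1\psi_2)$ from Lemma 1, delivers \eqref{l2b}.

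The main obstacle will be justifying the exchange of $\partial_s$ with the meromorphic continuation: Lemma 1 asserts only continuity in $s$, whereas the argument requires $\mathcal C^1$ dependence. This forces one to revisit the Vainberg parametrix of \cite{V2,Ki3} and verify that the operator $R(t,s)$, together with its Bloch-Gelfand transform and the coefficients of its Laurent/logarithmic expansion, depends smoothly on $s\in[0,\tfrac{2T}{3}]$; this smoothness is inherited from smoothness of the initial data in the parametrix construction. Once in hand, differentiating the expansions \eqref{l1a}--\eqref{l1b} term by term yields $M_{kj}=(\partial_s Q_{kj})(0)$ modulo the contribution from $\gamma_1'$, and similarly $N_{kj}$, with the finite-rank property for $k<0$ or $j>0$ inherited from $Q_{kj},\,S_{kj}$.
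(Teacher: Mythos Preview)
Your approach via $U(t,s)=-\partial_s V(t,s)$ is viable but genuinely different from the paper's. The paper does not differentiate in $s$; instead it expresses $U(t,0)$ as an \emph{integral} of $V(t,s)$ over $s$. Concretely, it introduces $\alpha\in\mathcal C^\infty(\R)$ with $\alpha=0$ on $(-\infty,T/2]$ and $\alpha=1$ on $[2T/3,\infty)$, notes that $Z=\alpha(t)U(t,0)h$ solves the inhomogeneous equation with source $[\partial_t^2,\alpha]U(\cdot,0)h$ and zero Cauchy data, and applies Duhamel's principle to get
\[
U(t,0)=\int_0^{2T/3}V(t,s)\,[\partial_t^2,\alpha](s)\,U(s,0)\,\d s,\qquad t\ge T.
\]
After inserting a spatial cutoff $\chi$ via finite propagation speed and applying $F'$, one obtains $F'(\psi_1U_1\psi_2)(T,\theta)$ as an $s$-integral of $F'(\psi_1V_1(t,s)\chi)(T,\theta)$ against a fixed smooth kernel; commuting $F'$ with the integral and reading off the meromorphic/logarithmic structure requires only the \emph{continuity} in $s$ that Lemma~1 already provides. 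Differentiating the same integral formula in $t$ handles $\partial_tU$.

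Your route, by contrast, forces you to upgrade Lemma~1 to $\mathcal C^1$ dependence in $s$, to know that the pole set and the logarithmic structure are stable under $\partial_s$, and to check that $\partial_sQ_{kj}(s)$ remains finite rank for $k<0$ or $j>0$ (note that Lemma~1 does not even assert finite rank for $Q_{kj}$, and a limit of finite-rank operators is in general only compact, so this must be extracted from the explicit Vainberg parametrix rather than ``inherited'' abstractly). All of this is obtainable from the construction in \cite{V2,Ki3}, so your argument can be completed, but the paper's Duhamel trick sidesteps the entire regularity discussion by trading differentiation for integration. (Incidentally, there is a harmless sign slip in your cutoff computation: one gets $\gamma_1\partial_sV_1=\gamma_1\partial_sV=-U_1$, hence $U_1=-\partial_s(\gamma_1V_1)-\gamma_1'V_1$.)
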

\begin{proof}
Let $\alpha\in\mathcal C^\infty({\R})$ be such that $\alpha(t)=0$ for $t\leq\frac{T}{2}$ and $\alpha(t)=1$ for $t\geq \frac{2T}{3}$. For all $h\in\dot{H}^1({\R}^n)$, $Z=\alpha(t)U(t,0)h$ is the solution of
\begin{equation}\label{eq=thm2R} \left\{\begin{aligned}
\partial_t^2Z-\Div_{x}(a(t,x)\nabla_{x}Z)&=[\partial_t^2,\alpha](t)U(t,0)h,\\
Z_{|\partial\Omega}&=0,\\
(Z,\partial_tZ)_{\vert t=0}&=(0,0).\end{aligned}\right.\end{equation}
We deduce from the Cauchy problem (\ref{eq=thm2R}) the following representation
\begin{equation}\label{eq=thm2S}U(t,0)=\alpha(t)U(t,0)=\int_0^tV(t,s)[\partial_t^2,\alpha](s)U(s,0)\d s,\quad t\geq T.\end{equation}
Since $[\partial_t^2,\alpha](t)=0$ for $t>\frac{2T}{3}$, the formula (\ref{eq=thm2S}) becomes
\[U(t,0)=\int_0^{\frac{2T}{3}}V(t,s)[\partial_t^2,\alpha](s)U(s,0)\d s,\quad  t\geq T.\]
Let $R>0$ be such that 
\[\textrm{supp}\psi_1\cup\textrm{supp}\psi_2\subset\{x\ :\ \abs{x}\leq R\}.\]
Choose $b=R+\rho+T+1$ and take $\chi\in\CII$ such that
\[\chi(x)=1\quad \textrm{for}\quad \abs{x}\leq R+\rho+T.\]
The finite speed of propagation implies 

\begin{equation}\label{l2c}\psi_1U(t,0)\psi_2=\int_0^{\frac{2T}{3}}\psi_1V(t,s)\chi[\partial_t^2,\alpha](s)U(s,0)\psi_2\d s,\quad  t\geq T.\end{equation}
Thus, for $\im(\theta)>AT$, we obtain
\[\begin{aligned}F'(\psi_1U_1(t,0)\psi_2)(T,\theta)=&F'\left[\int_0^{\frac{2T}{3}}\psi_1V_1(t,s)\chi[\partial_t^2,\alpha](s)U(s,0)\psi_2\d s\right](T,\theta)\\
&-\int_0^{\frac{2T}{3}}\psi_1V_1(0,s)\chi[\partial_t^2,\alpha](s)U(s,0)\psi_2\d s+e^{i\theta}\psi_1\psi_2\end{aligned}\]
and it follows
\[\begin{aligned}F'\left[\psi_1U_1(t,0)\psi_2\right](T,\theta)=&\int_0^{\frac{2T}{3}}F'\left[\psi_1V_1(t,s)\chi\right](T,\theta)[\partial_t^2,\alpha](s)U(s,0)\psi_2\d s\\
&-\int_0^{\frac{2T}{3}}\psi_1V_1(0,s)\chi[\partial_t^2,\alpha](s)U(s,0)\psi_2\d s+e^{i\theta}\psi_1\psi_2.\end{aligned}\]
Combining this representation with the meromorphic continuation of $F'(\psi_1V_1(t,s)\chi)(T,\theta)$ established in Lemma 1, we prove the meromorphic continuation of 
$F'(\psi_1U(t,0)\psi_2)(T,\theta)$ as well as \eqref{l2a}. It remains to prove the meromorphic continuation of $F'(\psi_1\partial_tU(t,0)\psi_2)(T,\theta)$.
Let $\beta\in{\CI}$. The formula (\ref{l2c}) implies that, for $t\geq T$, we have
\[\partial _tU(t,0)\beta=\int_0^{\frac{2T}{3}}\partial_tV(t,s)[\partial_t^2,\alpha](s)U(s,0)\beta \d s.\]
By density, this leads to
\[\psi_1\partial _tU(t,0)\psi_2=\int_0^{\frac{2T}{3}}\psi_1\partial _tV(t,s)\chi[\partial_t^2,\alpha](s)U(s,0)\psi_2\d s,\quad  t\geq T\]
and, for $\im(\theta)>AT$, we get
\[\begin{aligned}F'(\psi_1\partial_tU_1(t,0)\psi_2)(T,\theta)=&\int_0^{\frac{2T}{3}}F'(\psi_1\partial_tV_1(t,s)\chi)(T,\theta)[\partial_t^2,\alpha](s)U(s,0)\psi_2\d s\\
&-\int_0^{\frac{2T}{3}}\psi_1V_1(0,s)\chi[\partial_t^2,\alpha](s)U(s,0)\psi_2\d s.\end{aligned}\]
We conclude  by combining this representation with the results of Lemma 1.\end{proof}
\textit{Proof of Theorem 4.} By definition, we can write 
\[\gamma_1(t)\psi_1\U(t,0)\psi_2=\left(\begin{array}{cc}\gamma_1(t)\psi_1U(t,0)\psi_2&\gamma_1(t)\psi_1V(t,0)\psi_2\\ \gamma_1(t)\psi_1\partial_tU(t,0)\psi_2&\gamma_1(t)\psi_1\partial_tV(t,0)\psi_2\end{array}\right).\]
Moreover, for $\im(\theta)>AT$, we have
\begin{equation}\label{th4c}F'\left[\gamma_1(t)\psi_1\U(t,0)\psi_2\right](T,\theta)=e^{i\theta}\sum_{k=0}^{\infty}\left(\psi_1\U(T+kT,0)\psi_2e^{ik\theta}\right)=-e^{-i\theta}R_{\psi_1,\psi_2}(\theta)\end{equation}
and we obtain
\[\begin{aligned}R_{\psi_1,\psi_2}(\theta)&=-e^{i\theta}F'\left[\gamma_1(t)\psi_1\U(t,0)\psi_2\right](T,\theta)\\ &=-\left(\begin{array}{cc}e^{i\theta}F'(\psi_1U_1(t,0)\psi_2)(T,\theta)&e^{i\theta}F'(\psi_1V_1(t,0)\psi_2)(T,\theta)\\ e^{i\theta}F'(\psi_1\partial_tU_1(t,0)\psi_2)(T,\theta)&e^{i\theta}F'(\psi_1\partial_tV_1(t,0)\psi_2)(T,\theta)\end{array}\right).\end{aligned} \]
Thus, combining the results of Lemma 1 and Lemma 2, we prove Theorem 4.\hspace{4cm}$\square$

\section{Local energy decay}
The goal of this section is to prove Theorem 1, assuming (H1) and (H2) fulfilled. For this purpose, we show how assumption (H2) alter the meromorphic continuation of $R_{\psi_1,\psi_2}(\theta)$ established in Section 2. Then, by integrating on a suitable contour, we prove the local energy decay. We  treat separately the case of odd and  even dimensions. We start with $n$ odd.
\begin{lem}Assume $n\geq3$ odd, \eqref{obstacle1}, \eqref{obstacle2}, \eqref{normal}, \eqref{eq=perturbationA}, $\rm(H1)$ and $\rm(H2)$  fulfilled. Then, for all $\psi_1,\ \psi_2\in\mathcal C^\infty_0(\abs{x}\leq\rho+1)$, we get
\begin{equation}\label{l3a}\norm{\psi_1\U(t,s)\psi_2}_{\mathcal L(H(s),H(t))}\leq Ce^{-\delta(t-s)},\quad t\geq s.\end{equation}\end{lem}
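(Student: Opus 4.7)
The plan is to write $\psi_1\mathcal U(NT,0)\psi_2$ as a Fourier integral of $R_{\psi_1,\psi_2}(\theta)e^{-iN\theta}$ and deform the contour below the real axis to extract exponential decay, relying on the $2\pi$-periodicity of $R_{\psi_1,\psi_2}(\theta)$, the meromorphic continuation from Theorem 4, and the analyticity provided by $\rm(H2)$. First I would reduce \eqref{l3a} to the bound
\[\|\chi\mathcal U(NT,0)\chi\|_{\mathcal L(H(0))}\leq Ce^{-\delta NT},\quad N\in\N,\]
for a fixed $\chi\in\mathcal C^\infty_0(|x|\leq\rho+T+2)$ equal to $1$ on $|x|\leq\rho+T+1$. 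Indeed, for $t-s\geq T$, set $s_*=\lceil s/T\rceil T\in[s,s+T)$ and write $t-s_*=NT+r$ with $N\in\N$ and $r\in[0,T)$; by \eqref{p1a} we have $\mathcal U(t,s_*)=\mathcal U(r,0)\mathcal U(NT,0)$, so $\mathcal U(t,s)=\mathcal U(r,0)\mathcal U(NT,0)\mathcal U(s_*,s)$. Finite speed of propagation applied to $\U(s_*,s)\psi_2 f$ (supported in $|x|\leq\rho+1+T$) and to the backward cone from $\psi_1$ permits the insertion of $\chi$ on both sides of $\U(NT,0)$. The outer short-time propagators are bounded by $Be^{AT}$ via \eqref{p1b}, and for $t-s\leq T$ inequality \eqref{p1b} gives \eqref{l3a} directly.

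The key algebraic observation is the identity
\[R_{\chi,\chi}(\theta)=\chi R_{\phi_1,\phi_2}(\theta)\chi,\]
valid because $\phi_j\equiv 1$ on $\mathrm{supp}\,\chi$. Hypothesis $\rm(H2)$ for $n$ odd therefore transfers to $R_{\chi,\chi}$, providing an analytic extension to $\{\im(\theta)\geq 0\}$. Combining this with the meromorphic continuation to all of $\C$ supplied by Theorem 4 and the $2\pi$-periodicity in $\theta$, the poles of $R_{\chi,\chi}$ in the fundamental strip $|\mathrm{Re}\,\theta|\leq\pi$ form a discrete set avoiding the real axis, so one can choose $\delta>0$ such that $R_{\chi,\chi}(\theta)$ is holomorphic and uniformly bounded on the closed rectangle $\{|\mathrm{Re}\,\theta|\leq\pi,\ -\delta T\leq\im(\theta)\leq a\}$ for any fixed $a>AT$.

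Finally, the Neumann expansion valid for $\im(\theta)>AT$ gives $-e^{-i\theta}R_{\chi,\chi}(\theta)=\sum_{k\geq 0}e^{ik\theta}\chi\mathcal U(kT,0)\chi$, whence Fourier inversion yields
\[\chi\mathcal U(NT,0)\chi=-\frac{1}{2\pi}\int_{-\pi+ia}^{\pi+ia}R_{\chi,\chi}(\theta)\,e^{-i(N+1)\theta}\,d\theta.\]
Deforming the horizontal contour down to $\im(\theta)=-\delta T$ inside the rectangle, the two vertical segments at $\mathrm{Re}\,\theta=\pm\pi$ cancel by $2\pi$-periodicity of the integrand, and the enclosed interior is holomorphic by the previous paragraph. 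Since $|e^{-i(N+1)\theta}|=e^{-(N+1)\delta T}$ on the shifted contour and $\|R_{\chi,\chi}\|$ is uniformly bounded there, one obtains $\|\chi\mathcal U(NT,0)\chi\|\leq Ce^{-\delta NT}$, which together with the preliminary reduction delivers \eqref{l3a}. The most delicate point is securing the uniform $\delta>0$: one must rule out an accumulation of poles of the meromorphic continuation on the real axis from below, which rests on the finite-meromorphy structure of Definition 2 combined with the analyticity of $R_{\phi_1,\phi_2}$ on $\{\im(\theta)\geq 0\}$ supplied by $\rm(H2)$ for $n$ odd.
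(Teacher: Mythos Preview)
Your proof is correct and follows essentially the same route as the paper: represent $\chi\mathcal U(NT,0)\chi$ via the Neumann series for $R_{\chi,\chi}(\theta)$ (equivalently, the Fourier--Bloch--Gelfand inversion formula \eqref{l3c} with \eqref{th4c}), use $\rm(H2)$ and the meromorphic continuation of Theorem~4 together with $2\pi$-periodicity to push the horizontal contour below the real axis, cancel the vertical sides, and then reduce general $t,s$ to the discrete case by finite speed of propagation. The only cosmetic difference is that the paper applies the contour argument directly to $R_{\phi_1,\phi_2}$ and inserts $\phi_1,\phi_2$ at the end, whereas you first pass to $R_{\chi,\chi}=\chi R_{\phi_1,\phi_2}\chi$; both lead to the same estimate.
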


\begin{proof}  
Notice that, for $\im(\theta)>AT$, $F'\left[\gamma_1(t)\phi_1\U(t,0)\phi_2\right](t,\theta)$ is $T$-periodic with respect to $t$ and $2\pi$-periodic with respect to $\theta$(see \cite{V2} Theorem ). Applying \eqref{th4c},  we get
\begin{equation}\label{l3b}F'\left[\gamma_1(t)\phi_1\U(t,0)\phi_2\right](dT,\theta)=F'\left[\gamma_1(t)\phi_1\U(t,0)\phi_2\right](T,\theta)=-e^{-i\theta}R_{\phi_1,\phi_2}(\theta).\end{equation}
Moreover, from \cite{V2} we have the following inversion formula (see Lemma 1 of \cite{V2})
\begin{equation}\label{l3c}\phi_1\U(dT,0)\phi_2=\frac{1}{2\pi}\int_{[i(A+1)T-\pi,i(A+1) T+\pi]}\hspace{-2cm}e^{-id\theta}F'\left[\gamma_1(t)\phi_1\U(t,0)\phi_2\right](T,\theta)\d \theta.\end{equation}
We will show \eqref{l3a}, by combining these  statements  with assumption (H2). 

First, assumption (H2) and \eqref{l3b} imply that $F'\left[\gamma_1(t)\phi_1\U(t,0)\phi_2\right](dT,\theta)$ has no poles on $\{\theta\ :\ \im(\theta\geq0)\}$. It follows that there exists $\delta>0$ such that $F'\left[\gamma_1(t)\phi_1\U(t,0)\phi_2\right](dT,\theta)$ has no poles on $\{\theta\ :\ \im(\theta)\geq-\delta T,\ -\pi\leq\textrm{Re}(\theta)\leq\pi\}$. Consider the contour $C_1$ defined by 
\[\mathcal C_1=[i(A+1)T+\pi,i(A+1)T-\pi]\cup[i(A+1)T-\pi,-i\delta T-\pi]\cup[-i\delta T-\pi,-i\delta T+\pi] \cup[-i\delta T+\pi,i(A+1)T+\pi].\]
The Cauchy formula implies
\[\int_{C_1}e^{-id\theta}F'\left[\gamma_1(t)\phi_1\U(t,0)\phi_2\right](T,\theta)\d \theta=0.\] 
Also, since  $F'\left[\gamma_1(t)\phi_1\U(t,0)\phi_2\right](T,\theta)$ is $2\pi$-periodic with respect to $\theta$ we have
\[\int_{[i(A+1)T-\pi,-i\delta T-\pi]}\hspace{-2cm}e^{-id\theta}F'\left[\gamma_1(t)\phi_1\U(t,0)\phi_2\right](T,\theta)\d \theta=-\int_{[-i\delta T+\pi,i(A+1)T+\pi]}\hspace{-2cm}e^{-id\theta}F'\left[\gamma_1(t)\phi_1\U(t,0)\phi_2\right](T,\theta)\d \theta\]
and we obtain
\begin{equation}\label{l3d}\int_{[i(A+1)T-\pi,i(A+1) T+\pi]}\hspace{-3cm}e^{-id\theta}F'\left[\gamma_1(t)\phi_1\U(t,0)\phi_2\right](T,\theta)\d \theta=\int_{[-i\delta T-\pi,-i\delta T+\pi]}\hspace{-2cm}e^{-id\theta}F'\left[\gamma_1(t)\phi_1\U(t,0)\phi_2\right](T,\theta)\d \theta.\end{equation}
It is obvious that
\[\int_{[-i\delta T-\pi,-i\delta T+\pi]}\hspace{-2cm}e^{-id\theta}F'\left[\gamma_1(t)\phi_1\U(t,0)\phi_2\right](T,\theta)\d \theta=e^{-\delta(dT)}\int_{[-\pi,\pi]}e^{-id\theta}F'\left[\gamma_1(t)\phi_1\U(t,0)\phi_2\right](T,\theta-i\delta T)\d \theta\]
and combining this with \eqref{l3d} and the inversion formula \eqref{l3c}, we get
\begin{equation}\label{l3e}\norm{\phi_1\U(dT,0)\phi_2}_{\mathcal L(H(0))}\leq Ce^{-\delta(dT)}.\end{equation}
Now let $\psi_1,\ \psi_2\in\mathcal C^\infty_0(\abs{x}\leq\rho+1)$, and let $t,\ s\in\R$ be such that $t\geq s$. we write $t=t'+mT$ and $s=s'+kT$ with $0\leq t',s'<T$ and $m,\ k\in\mathbb{N}$. The finite speed of propagation implies
\[\psi_1\U(t,s)\psi_2=\psi_1\U(t',0)\phi_1\U((m-k)T,0)\phi_2\U(0,s')\psi_2.\]
Then, applying \eqref{l3e} and Theorem 2, we obtain
\[\norm{\psi_1\U(t,s)\psi_2}_{\mathcal L(H(s),H(t))}\leq C\norm{\phi_1\U((m-k)T,0)\phi_2}_{\mathcal L(H(0))}\leq C'e^{-\delta((m-k)T)}\leq C'e^{-\delta(t-s)}.\]
\end{proof}

\begin{lem}Assume $n\geq4$ even, \eqref{obstacle1}, \eqref{obstacle2}, \eqref{normal}, \eqref{eq=perturbationA}, $\rm(H1)$ and $\rm(H2)$  fulfilled. Then, for all $\psi_1,\ \psi_2\in\mathcal C^\infty_0(\abs{x}\leq\rho+1)$, we get
\begin{equation}\label{l4a}\norm{\psi_1\U(t,s)\psi_2}_{\mathcal L(H(s),H(t))}\leq Cp(t-s),\quad t\geq s\end{equation}
with
\[p(t)=\frac{1}{(t+1)\ln^2(t+e)}.\]\end{lem}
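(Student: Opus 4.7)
I would mimic the proof of Lemma 3, replacing the downward contour shift (blocked here by the branch of $R_{\phi_1,\phi_2}$ on $i\R^-$) by a deformation that hugs the real axis and detours around the logarithmic singularity at $\theta = 0$ permitted by $\rm(H2)$. As in Lemma 3, formula \eqref{l3b} and the inversion formula \eqref{l3c} give
\begin{equation*}
\phi_1\U(dT,0)\phi_2 = -\frac{1}{2\pi}\int_{[i(A+1)T-\pi,\,i(A+1)T+\pi]} e^{-i(d+1)\theta}\,R_{\phi_1,\phi_2}(\theta)\,\d\theta,
\end{equation*}
and $\rm(H2)$ together with $2\pi$-periodicity guarantees that $R_{\phi_1,\phi_2}$ is analytic on $\{-\pi\leq\re\theta\leq\pi,\ \im\theta>0\}$, extends continuously to $[-\pi,\pi]\setminus\{0\}$, and remains uniformly bounded near $0$; no poles are crossed when the contour is pushed down to the real line.

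For each $d\in\mathbb N$, choose a small radius $r_d>0$ (to be optimized) and set $\mathcal C_d = [-\pi,-r_d]\cup S_d\cup[r_d,\pi]$, where $S_d$ is the upper semicircle of radius $r_d$ around $0$. The vertical sides at $\re\theta = \pm\pi$ cancel by $2\pi$-periodicity, and Cauchy's theorem yields
\begin{equation*}
\phi_1\U(dT,0)\phi_2 = -\frac{1}{2\pi}\int_{\mathcal C_d} e^{-i(d+1)\theta}\,R_{\phi_1,\phi_2}(\theta)\,\d\theta.
\end{equation*}
The expansion \eqref{th4a}, combined with the observation that the $\limsup$ bound in $\rm(H2)$ forces $R_{kj}=0$ whenever $k<0$ or $j<0$, yields near zero the local form $R_{\phi_1,\phi_2}(\theta) = R_{00} + R_{01}(\log\theta)^{-1} + R_{02}(\log\theta)^{-2} + O(\theta)$ with bounded coefficients. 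The constant piece $R_{00}$ contributes nothing to the full integral (the Fourier coefficient of a constant vanishes for $d\neq 0$), while the $O(\theta)$ remainder is smooth enough to produce rapid decay. The dominant term is $R_{01}(\log\theta)^{-1}$, whose Fourier coefficient $\int_{-\pi}^{\pi} e^{-id\theta}(\log\theta)^{-1}\,\d\theta$ is of order $(d\log^2 d)^{-1}$ — as can be verified by one integration by parts combined with the substitution $\theta = \varphi/d$ and the estimate $\int_0^{\infty}\sin\varphi\,\varphi^{-1}(\log\varphi-\log d)^{-2}\,\d\varphi \sim \pi/(2\log^2 d)$. The higher powers $(\log\theta)^{-j}$, $j\geq 2$, produce smaller contributions, so one obtains
\begin{equation*}
\norm{\phi_1\U(dT,0)\phi_2}_{\mathcal L(H(0))} \leq \frac{C}{(d+1)\log^2(d+e)},
\end{equation*}
which is precisely \eqref{a}.

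The extension to arbitrary $t\geq s$ then copies the conclusion of Lemma 3: decompose $t = t'+mT$, $s = s'+kT$ with $0\leq t',s' < T$, apply finite speed of propagation to factor $\psi_1\U(t,s)\psi_2 = \psi_1\U(t',0)\phi_1\U((m-k)T,0)\phi_2\U(0,s')\psi_2$, and absorb the outer factors using Theorem 2.

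The main obstacle is recovering the sharp exponent $2$ on the logarithm. A crude estimate — for instance bounding $\norm{R_{\phi_1,\phi_2}(\theta)}$ uniformly on a shifted contour $\im\theta = 1/d$ — only produces $1/\log d$, which is not summable in $d$. Likewise, a single integration by parts against $e^{-id\theta}$ on the straight segments combined with the crude arc bound $O(r_d)$ only delivers $1/(d\log d)$. To get $\log^{-2}$ one must use the full structure of the expansion \eqref{th4a}, the cancellation of the constant term $R_{00}$ over the closed contour, and the refined estimate of the Fourier coefficient of $(\log\theta)^{-1}$ described above, rather than simply the $L^\infty$-norm of $R_{\phi_1,\phi_2}$ on the deformed contour.
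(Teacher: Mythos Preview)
Your approach is sound and reaches the same conclusion, but it differs from the paper's in the choice of contour. The paper does \emph{not} stay on the real axis: it pushes the horizontal segment down to $\im\theta=-\delta T$, keeping only a fixed ``keyhole'' detour $\omega$ (two short vertical legs plus an upper semicircle of fixed radius $\nu$) around the branch point $\theta=0$. On the horizontal pieces $\Gamma_1,\Gamma_2$ at height $-\delta T$ the factor $e^{-id\theta}$ already yields exponential decay in $d$, so the entire difficulty is concentrated on the fixed arc $\omega$; the paper then invokes the structure \eqref{eq=thm2G} and the estimates of Lemmas~2--3 of \cite{Ki3} to extract the $1/(d\log^2 d)$ bound from that piece alone. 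Your route, by contrast, keeps the contour on $\R$ and treats the problem as a Fourier-coefficient estimate for a function with a logarithmic singularity at the origin. Both strategies ultimately hinge on the same asymptotic---the Fourier coefficient of $\chi(\theta)(\log\theta)^{-1}$ is $O\big(d^{-1}\log^{-2}d\big)$---but the paper's contour cleanly isolates the singular contribution at the outset, whereas yours must first peel off the globally analytic part of $R_{\phi_1,\phi_2}$ (via a cutoff near $0$) before the local expansion becomes usable.

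Two small corrections to your write-up. First, $\rm(H2)$ together with \eqref{th4a} only forces $R_{kj}=0$ for $k<0$ or for $k=0$ with $j<0$; terms $\theta^k(\log\theta)^{|j|}$ with $k\geq1$ may survive, so your ``$O(\theta)$ remainder'' need not be $C^1$ and requires the same Fourier-asymptotic treatment as the leading singular term (it is, however, of strictly lower order, so this does not affect the final bound). Second, your sentence ``the constant piece $R_{00}$ contributes nothing'' is only literally true once you have separated $R_{\phi_1,\phi_2}$ into a globally smooth periodic piece (whose Fourier coefficients decay rapidly) and a compactly supported singular piece; the expansion \eqref{th4a} is local, valid only for $|\theta|\leq\epsilon_0$, so this decomposition needs to be made explicit before the Fourier argument can proceed.
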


\begin{proof}  
Repeating the arguments used in the proof of Lemma 3, we obtain that $F'\left[\gamma_1(t)\phi_1\U(t,0)\phi_2\right](T,\theta)$ has no poles on $\{ \theta\in\mathbb C'\ :\ \textrm{Im}(\theta)\geq0\}$. Moreover, representation \eqref{th4a} implies that there exists $\epsilon_0>0$ such that for $\theta\in\mathbb C'$ with $\vert\theta\vert\leq\epsilon_0$ we have
\begin{equation}\label{eq=thm2F}F'\left[\gamma_1(t)\phi_1\U(t,0)\phi_2\right](T,\theta)=\sum_{k\geq-m}\sum_{j\geq-m_k}R_{kj}\theta^k(\log\theta)^{-j}\end{equation}
and assumption (H2) implies that in this representation we have $R_{kj}=0$ for $k<0$ or $k=0$ and $j<0$. It follows that, for $\theta\in\mathbb C'$ with $\vert\theta\vert\leq\epsilon_0$, we obtain the following representation
\begin{equation}\label{eq=thm2G}F'\left[\gamma_1(t)\phi_1\U(t,0)\phi_2\right](T,\theta)=A(\theta)+B\theta^{m_0}\log(\theta)^{-\mu}+ \underset{\theta\rightarrow0}o\left(\theta^{m_0}\log(\theta)^{-\mu}\right)\end{equation}
with $A(\theta)$ analytic with respect to $\theta$ for $\vert\theta\vert\leq\epsilon_0$ , $B$ a finite-dimensional operator, $m_0\geq0$  and $\mu\geq 1$. Since $F'\left[\gamma_1(t)\phi_1\U(t,0)\phi_2\right](T,\theta)$ has no poles on $\{ \theta\in\mathbb C'\ :\ \textrm{Im}(\theta)\geq0\}$, there exists $\displaystyle0<\delta\leq\frac{\epsilon_0}{T}$ and $0<\nu<\epsilon_0$  sufficiently small such that  $F'\left[\gamma_1(t)\phi_1\U(t,0)\phi_2\right](T,\theta)$ has no poles on  \[\{ \theta\in\mathbb C\ :\ \textrm{Im}(\theta)\geq-\delta T ,\ -\pi\leq  \textrm{Re}(\theta)\leq-\nu,\ \nu\leq  \textrm{Re}(\theta)\leq\pi\}.\] Consider the contour $\Sigma=\Gamma_1\cup\omega\cup\Gamma_2$ where $\Gamma_1=[-i\delta T-\pi,-i\delta T-\nu]$, $\Gamma_2=[-i\delta+\nu,-i\delta+\pi]$. The contour  $\omega$  of $\mathbb C$, is a curve connecting $-i\delta T-\nu$ and $-i\delta T+\nu$ symmetric with respect to the axis $ \textrm{Re}(\theta)=0$. The part of $\omega$ lying in $\{\theta\ :\  \textrm{Im}(\theta)\geq0\}$ is a half-circle
with radius $\nu$, $\omega\cap\{\theta\ :\   \textrm{Re}(\theta)<0,\  \textrm{Im}(\theta)\leq0\}=[-\nu-i\delta T,-\nu]$ and $\omega\cap\{\theta\ :\   \textrm{Re}(\theta)>0,\  \textrm{Im}(\theta)\leq0\}=[\nu,\nu-i\delta T]$. Thus, $\omega$ is included in the
region where we have no poles of $F'\left[\gamma_1(t)\phi_1\U(t,0)\phi_2\right](T,\theta)$. Consider the closed contour 
\[\mathcal C_2=[i(A+1)T+\pi,i(A+1)T-\pi]\cup[i(A+1)T-\pi,-i\delta T-\pi]\cup\Sigma\cup[-i\delta T+\pi,i(A+1)T+\pi].\]
An application of the Cauchy formula yields
\[\int_{\mathcal C_2}e^{-id\theta}F'\left[\gamma_1(t)\phi_1\U(t,0)\phi_2\right](T,\theta)\d\theta=0.\]
Applying the same arguments as those used in the proof of Lemma 3, we obtain
\[\displaystyle\int_{[i(A+1)T-\pi,i(A+1) T+\pi]}\hspace{-2cm}F\left[\gamma_1(t)\phi_1\U(t,0)\phi_2\right](dT,\theta)\d \theta=\int_\Sigma e^{-id\theta}F'\left[\gamma_1(t)\phi_1\U(t,0)\phi_2\right](T,\theta)\d \theta\]
and the inversion formula \eqref{l3c} implies 

\begin{equation}\label{l4b}\phi_1\U(dT,0)\phi_2=\frac{1}{2\pi}\int_\Sigma e^{-id\theta}F'\left[\gamma_1(t)\phi_1\U(t,0)\phi_2\right](T,\theta)\d \theta,\quad d\in\mathbb N.\end{equation}

Combining this representation with \eqref{eq=thm2G} and applying some arguments used in Lemma 2 and Lemma 3 of \cite{Ki3}, we obtain \eqref{l4a}.

\end{proof}

Combining the results of Lemma 3 and Lemma 4, we prove Theorem 1.

\section{ Examples of metrics $a(t,x)$ and obstacles $O(t)$} 

In this section we will apply some properties of solutions of the wave equations with   non-trapping metrics  independent of $t$ and fixed obstacle  to construct time periodic metrics and moving obstacles such that conditions (H1) and  (H2) are fulfilled. For this purpose, we assume that (H1) is fulfilled for the metrics $a(t,x)$ and obstacle $O(t)$ that we consider and we will establish examples for (H2).   In order to prove (H2), we will modify  the size $T$ of the period of $a(t,x)$. This choice is justified by the properties of $\mathcal U(t,s)$.

Let $T_1>0$ and let $((a_T(t,x), O_T(t)))_{T\geq T_1}$ be a family of couples of functions and obstacles such that the following conditions are fulfilled:
\begin{enumerate}
\item[$\rm(H3i)$] $a_T(t,x)$ and $O_T(t)$ are $T$-periodic with respect to  $t$ and $a_T(t,x)$ satisfies (\ref{eq=perturbationA}),
\item[$\rm(H3ii)$] for all $T\geq T_1$, if $(a(t,x), O(t))=(a_T(t,x), O_T(t))$ then conditions \eqref{obstacle1}, \eqref{obstacle2}, \eqref{normal} and $\rm(H1)$ are fulfilled,
 \item[$\rm(H3iii)$] there exist a function $a_1(x)$ and an obstacle $O$ independent of $t$ such that for  \[(a(t,x), O(t))=(a_1(x), O)\] condition $\rm(H1)$ is fulfilled and, for all $T_1\leq t\leq T$, we have $a_T(t,x)=a_1(x)$ and $O_T(t)=O$.
 \end{enumerate}

Let $H$ be the closure of the space $\mathcal C^\infty_0(\R^n\setminus O)\times\mathcal C^\infty_0(\R^n\setminus O)$ with respect to the norm
\[\displaystyle\norm{f}_{H}=\left(\underset{{\R^n\setminus O}}\int\left(\abs{\nabla_xf_1}^2+\abs{f_2}^2\right)\d x\right)^\half,\quad f=(f_1,f_2)\in\mathcal C^\infty_0(\R^n\setminus O)\times\mathcal C^\infty_0(\R^n\setminus O).\]
Consider the following Cauchy problem
\begin{equation} \label{eq=exC}  \left\{\begin{array}{c}
v_{tt}-\Div_{x}(a_1(x)\nabla_{x}v)=0,\quad t\in\R,\ x\in\R^n\setminus O\\
v_{|\partial O}=0,\quad t\in\R,\\
(v,v_{t})(0)=f,\end{array}\right.\end{equation}
and the associate propagator
\[\mathcal{V}(t):H\ni f\longmapsto (v,v_t)(t)\in H.\]
Let $u$ be solution of (\ref{eq=lepbA}). For $T_1\leq t\leq T$ we have
\[\partial_t^2 u -\Div_x(a_1(x)\nabla_x u)=\partial_t^2 u -\Div_x(a_T(t,x)\nabla_x u)=0.\]
It follows that for $(a(t,x),O(t))=(a_T(t,x), O_T(t))$ we get
\begin{equation} \label{eq=exD}\mathcal{U}(t,s)=\mathcal{V}(t-s),\quad T_1\leq s<t\leq T\end{equation}
and 
\begin{equation} \label{tru}H(t)=H,\quad T_1\leq t\leq T.\end{equation}
The asymptotic expansion of $\chi \mathcal{V}(t)\chi$ as $t\rightarrow+\infty$   has been studied by many authors (see  \cite{V1}, \cite{V3} and \cite{V4}). It has been proven that, for non-trapping metrics and for $n\geq3$, the local energy decreases. To prove (H2), we will apply the following result. 
\begin{Thm}\label{t13}
Assume $n\geq3$. Let $\phi\in{\CI}$. Then, we have
\begin{equation} \label{eq=thm11A}\Vert \phi\mathcal{V}(t)\phi\Vert_{\mathcal{L}(H)}\leq C_{\phi}p(t)\end{equation}
with 
\[\left\{\begin{aligned}p(t)=& e^{-\delta t}\ \ \textrm{for $n$ odd,}\\ p(t)=&\left\langle t\right\rangle^{1-n}\ \ \textrm{for $n$ even}.\end{aligned}\right.\]\end{Thm}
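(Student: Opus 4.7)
The plan is to reduce this to the classical resolvent machinery for the stationary operator $L = -\Div_x(a_1(x)\nabla_x \cdot)$ with Dirichlet boundary conditions on $\R^n\setminus O$, and then invoke the non-trapping resolvent estimates of Vainberg. Let $R(\la) = (L - \la^2)^{-1}$, which is bounded on $L^2(\R^n\setminus O)$ for $\im(\la) > 0$. The first step is the meromorphic continuation of the cut-off resolvent $\phi R(\la) \phi$. Using a parametrix construction based on the free resolvent, one obtains a continuation to all of $\C$ for $n$ odd and to the logarithmic Riemann surface of $\C \setminus \{0\}$ for $n$ even. This is where the non-trapping hypothesis enters: thanks to propagation of singularities (Melrose--Sjöstrand), non-trapping implies that the parametrix has a smoothing remainder in any fixed bounded region, and together with the absence of real $L^2$-eigenvalues (which follows from unique continuation), one gets a pole-free region below and up to the real axis away from $\la = 0$.

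Step two is to establish polynomial bounds on $\phi R(\la)\phi$ in a strip below the real axis. For $n$ odd, non-trapping yields, for any $\phi\in\CI$, a constant $\delta > 0$ and polynomial bounds of the form $\|\phi R(\la)\phi\|\leq C\langle \la\rangle^N$ for $\im(\la)\geq -\delta$, $|\la|$ large. For $n$ even, the resolvent has an expansion at $\la=0$ of the form $A(\la) + B \la^{n-2}\log(\la) + \ldots$ with $A$ analytic, and analogous polynomial bounds hold on a conic neighborhood of the real axis in the logarithmic covering.

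The third step converts these bounds into time-decay of $\phi\mathcal V(t)\phi$ via the Stone-type representation
\[
\phi\mathcal V(t)\phi \, f \;=\; \frac{1}{2\pi i}\int_{\im(\la) = c} e^{-i\la t}\,\phi\, \widetilde R(\la)\,\phi\, f \, \d\la,
\]
where $\widetilde R(\la)$ is the matrix-valued resolvent for the first-order system form of the wave equation. For $n$ odd, the contour is shifted down to $\im(\la) = -\delta$; by the pole-free region and the polynomial bounds, the shifted integral converges absolutely (after integrating by parts to gain decay in $\la$) and the factor $e^{-i\la t}$ produces the bound $C_\phi e^{-\delta t}$. For $n$ even, the contour is deformed into a keyhole around $\la = 0$ on the logarithmic surface; the dominant contribution comes from the branch structure near zero, and a standard stationary-phase/Watson-lemma computation on the key-hole integral produces the $\langle t\rangle^{1-n}$ rate.

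The main obstacle is the polynomial resolvent bound in a pole-free region, which genuinely requires the non-trapping assumption and the Melrose--Sjöstrand parametrix; all subsequent steps are contour deformations and elementary asymptotics. Since all of the above is carried out in detail in \cite{V1}, \cite{V3} and \cite{V4} under exactly the hypotheses at hand (time-independent non-trapping metric with Dirichlet boundary condition on a fixed non-trapping obstacle, $n\geq 3$), I would conclude the proof by invoking those references directly.
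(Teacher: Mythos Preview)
Your proposal is correct and matches the paper's own treatment: the paper does not prove this theorem at all but simply states that estimate \eqref{eq=thm11A} ``has been established by Vainberg in \cite{V3}, \cite{V1} but also by Vodev in \cite{V5} and \cite{V4}.'' You go a bit further by sketching the resolvent/contour-deformation mechanism behind those references before invoking them, but the bottom line is identical.
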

Estimate (\ref{eq=thm11A}) has been established by Vainberg in  \cite{V3}, \cite{V1} but also by Vodev in \cite{V5} and \cite{V4}.
  For $n\geq4$ even we will use the following identity.

\begin{lem}\label{l4}
Let $\psi\in\mathcal{C}^\infty_0(\vert x\vert\leq\rho+1+T_1)$ be such that $\psi=1$, for $\vert x\vert\leq\rho+\half+T_1$. Then, we have
\begin{equation} \label{eq=lem5A}\mathcal{U}(T_1,0)-\mathcal{V}(T_1)=\psi(\mathcal{U}(T_1,0)-\mathcal{V}(T_1))=(\mathcal{U}(T_1,0)-\mathcal{V}(T_1))\psi.\end{equation}
\end{lem}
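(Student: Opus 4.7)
The two equalities amount to support statements for the operator $W := \mathcal{U}(T_1, 0) - \mathcal{V}(T_1)$. Since $\psi = 1$ on $\abs{x} \leq \rho + \half + T_1$, the identity $\psi W = W$ asserts that $Wf$ vanishes on $\abs{x} > \rho + \half + T_1$ for every $f$, while $W\psi = W$ asserts that $Wf = 0$ whenever $f$ is supported in $\abs{x} \geq \rho + \half + T_1$. Both will be obtained from finite speed of propagation applied in the exterior region $\abs{x} > \rho$, where (\ref{eq=perturbationA})(ii) and (\ref{obstacle1}) force both mixed problems to reduce to the free wave equation $\partial_t^2 u - \Delta u = 0$.

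To prove $W\psi = W$, I would fix $f$ supported in $\abs{y} \geq \rho + \half + T_1$ and construct an explicit common solution $\tilde u$ of both mixed problems on $[0, T_1]$. The natural candidate is the unique solution on $\R \times \R^n$ of the Cauchy problem $\partial_t^2 \tilde u - \Delta \tilde u = 0$ with data $f$ at $t = 0$. Classical finite speed of propagation for $\partial_t^2 - \Delta$ yields $\tilde u(t, \cdot) \equiv 0$ on $\{\abs{x} \leq \rho + \half + T_1 - t\}$, a set which for $t \in [0, T_1]$ contains both the fixed obstacle $O$ and every $O_T(t)$. Where $\tilde u$ is not identically zero we have $\abs{x} > \rho$ and hence $a_T(t, x) = a_1(x) = 1$, so $\tilde u$ automatically satisfies $\partial_t^2 \tilde u - \Div(a_T \nabla \tilde u) = 0$ and $\partial_t^2 \tilde u - \Div(a_1 \nabla \tilde u) = 0$, together with the Dirichlet conditions on $\partial \Omega$ and on $\partial O$. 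The uniqueness assertion in Theorem 2 then forces $\mathcal{U}(T_1, 0) f = (\tilde u, \partial_t \tilde u)(T_1, \cdot) = \mathcal{V}(T_1) f$, so $Wf = 0$.

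To prove $\psi W = W$, I would take an arbitrary $f$ and work with the difference $w = \mathcal{U}(T_1, 0) f - \mathcal{V}(T_1) f$ on the exterior slab $S := \{(t, y) : 0 \leq t \leq T_1,\ \abs{y} > \rho\}$. Both $\mathcal{U}(t, 0) f$ and $\mathcal{V}(t) f$ satisfy the free wave equation on $S$, so $w$ does too, with zero Cauchy data at $t = 0$. For any target point $(T_1, x)$ with $\abs{x} \geq \rho + \half + T_1$, the speed-$1$ backward cone $\{(t, y) : 0 \leq t \leq T_1,\ \abs{y - x} \leq T_1 - t\}$ satisfies $\abs{y} \geq \abs{x} - (T_1 - t) \geq \rho + \half + t > \rho$ throughout, so it sits entirely inside $S$. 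An application of the standard domain-of-dependence identity for $\partial_t^2 - \Delta$ on this truncated cone then yields $w(T_1, x) = w_t(T_1, x) = 0$ for all $\abs{x} \geq \rho + \half + T_1$, which is exactly $\psi W = W$.

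The one non-routine point is that the interior wave speed $\sqrt{a_T(t, x)}$ may well exceed $1$, so one cannot invoke a global domain-of-dependence estimate; the finite-speed statement used in the two cone arguments above must therefore be applied only to cones which lie entirely in $\{\abs{y} > \rho\}$, and is proved by a standard energy identity on the truncated exterior cone, using that the principal coefficient is identically $1$ there. Once this local version of finite speed of propagation is in place, both halves of \eqref{eq=lem5A} follow from the cone arguments above combined with the uniqueness in Theorem 2.
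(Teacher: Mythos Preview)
Your proof is correct and rests on the same two ingredients as the paper's: finite speed of propagation in the exterior region $\{|x|>\rho\}$, where both evolutions reduce to the free wave equation, together with the uniqueness part of Theorem~2. For the identity $W(1-\psi)=0$ the arguments are virtually identical (the paper starts from $\mathcal U(t,0)(1-\psi)g$ and shows it also solves the $\mathcal V$-problem; you start from the free solution and show it solves both---equivalent by uniqueness).

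For $(1-\psi)W=0$ the routes differ slightly. The paper multiplies $u$ and $v$ by $(1-\psi)$ and observes that $(1-\psi)u$ and $(1-\psi)v$ satisfy free-wave Cauchy problems with the same initial data but with right-hand sides $[\Delta,\psi]u$ and $[\Delta,\psi]v$; concluding that these coincide still requires knowing $u=v$ on $\mathrm{supp}\,\nabla\psi\subset\{|x|>\rho+\tfrac12+T_1\}$ for $0\le t\le T_1$, which is exactly the exterior domain-of-dependence statement you prove directly. Your backward-cone argument on the difference is therefore a bit more self-contained, and your explicit remark that the speed-$1$ cones must stay inside $\{|x|>\rho\}$ (since the interior speed $\sqrt{a_T}$ may exceed $1$) makes precise a point the paper leaves implicit.
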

\begin{proof}
First, notice that \eqref{tru} implies $H(0)=H$.
Now, choose $g\in H(0)=H$ and let $w$ be the function defined by $(w,w_t)(t)=\mathcal{U}(t,0)(1-\psi)g$. The finite speed of propagation implies that, for  $0\leq t\leq T_1$ and $\vert x\vert\leq\rho+\half$, we get $w(t,x)=0$. Moreover, we have
\begin{equation} \label{eq=lem5B} \Div_x(a_1(x)\nabla_x)=\Delta_x=\Div_x(a(t,x)\nabla_x),\quad\textrm{for } \vert x\vert>\rho.\end{equation}
Thus, $w$ is solution on $0\leq t\leq T_1$ of the problem
\[  \left\{\begin{array}{c}
w_{tt}-\Div_{x}(a_1(x)\nabla_{x}w)=0,\quad t\in\R,\ x\in\R^n\setminus O\\
w_{|\partial O}=0,\quad t\in\R,\\

(w,w_{t})(0)=(1-\psi)g\end{array}\right.\]
and it follows that
\begin{equation} \label{eq=lem5C}(\mathcal{U}(T_1,0)-\mathcal{V}(T_1))(1-\psi)=0.\end{equation}
Now, let  $u$ and $v$ be the functions defined by $(u,u_t)(t)=\mathcal{U}(t,0)g$ and $(v,v_t)(t)=\mathcal{V}(t)g$ with $g\in H$. Applying (\ref{eq=lem5B}), we can easily show that on  $(1-\psi)u$ is the solution of
\[  \left\{\begin{array}{c}
\partial_t^2((1-\psi)u))-\Delta_x((1-\psi)u))=[\Delta_x,\psi]u,\\
(((1-\psi)u),((1-\psi)u)_{t})(0)=(1-\psi)g,\end{array}\right.\]
and $(1-\psi)v$ is the solution of
\[  \left\{\begin{array}{c}
\partial_t^2(((1-\psi)v))-\Delta_x((1-\psi)v))=[\Delta_x,\psi]v,\\
(((1-\psi)v),((1-\psi)v)_{t})(0)=(1-\psi)g.\end{array}\right.\]
We have
\begin{equation} \label{eq=lem5D}(1-\psi)(\mathcal{U}(T_1,0)-\mathcal{V}(T_1))=0.\end{equation}
Combining (\ref{eq=lem5C}) and (\ref{eq=lem5D}), we get (\ref{eq=lem5A}).\end{proof}

 Combining the  arguments  used in the proofs of Lemma 7, 8 and 9 and Theorem 14 of \cite{Ki3} with the identity \eqref{eq=lem5A}, we obtain the following.

\begin{Thm}\label{t14}
Assume $n\geq3$   and let $((a_T(t,x),O_T(t)))_{T\geq T_1}$ satisfy $\rm(H3i)$, $\rm(H3ii)$, $\rm(H3iii)$. Then, for $T$ large enough and for $(a(t,x),O(t))=(a_T(t,x),O_T(t))$, assumption $\rm(H2)$ is fulfilled.
\end{Thm}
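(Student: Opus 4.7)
The plan is to combine the stationary local energy decay established in \thmref{t13} with a decomposition of $\mathcal U(T,0)$ into a stationary part plus a spatially localised remainder, and then invert the resulting perturbation of the resolvent of $\mathcal V(T)$ via a Neumann series that becomes norm-contractive once $T$ is taken sufficiently large. Using $\rm(H3iii)$ and \eqref{eq=exD}, one has $\mathcal U(T,T_1) = \mathcal V(T-T_1)$; combining this with $\mathcal U(T,0) = \mathcal U(T,T_1)\mathcal U(T_1,0)$ and with \lemref{l4} (writing $\tilde L := \mathcal U(T_1,0) - \mathcal V(T_1)$, which satisfies $\tilde L = \psi\tilde L\psi$), one obtains
\[ \mathcal U(T,0) = \mathcal V(T) + L(T),\qquad L(T) := \mathcal V(T-T_1)\tilde L, \]
and $\|\tilde L\|_{\mathcal L(H)}$ depends only on $T_1$, not on $T$.

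For the cutoff resolvent of the stationary propagator $\mathcal V(T)$ I would start from the Neumann expansion
\[ (\mathcal V(T) - e^{-i\theta})^{-1} = -e^{i\theta}\sum_{k=0}^{\infty}\mathcal V(kT)e^{ik\theta}, \]
valid initially for $\textrm{Im}(\theta)$ large. Sandwiching with cutoffs $\chi\in\mathcal C^{\infty}_{0}(|x|\leq\rho+T+2)$ and invoking \thmref{t13} gives $\|\chi\mathcal V(kT)\chi\|_{\mathcal L(H)}\leq C p(kT)$, which extends the convergence: for $n$ odd the exponential decay yields analytic continuation to $\{\textrm{Im}(\theta) > -\delta T\}$, while for $n\geq 4$ even the bound $\langle kT\rangle^{1-n}$ is summable (since $n-1\geq 3$), providing absolute and uniform convergence on the closed half-plane $\{\textrm{Im}(\theta)\geq 0\}$ and hence a continuous extension there.

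To transfer this to $\mathcal U(T,0)$, set $z = e^{-i\theta}$, $R(z) = (\mathcal V(T)-z)^{-1}$, $S(z) = (\mathcal U(T,0)-z)^{-1}$. The resolvent identity $S(z) = R(z) - R(z)L(T)S(z)$, together with $\tilde L = \psi\tilde L\psi$, yields (after multiplication on the left by $\psi$ and setting $w = \psi S(z)\phi_2$) the equation $(I + K(z))w = \psi R(z)\phi_2$, where
\[ K(z) := \psi R(z)\mathcal V(T-T_1)\tilde L. \]
Expanding the stationary factor as $\psi R(z)\mathcal V(T-T_1)\psi = -\sum_{k\geq 0} z^{-k-1}\psi\mathcal V(kT+T-T_1)\psi$ and applying \thmref{t13} gives $\|\psi R(z)\mathcal V(T-T_1)\psi\|_{\mathcal L(H)} \leq C\, p(T-T_1)$ uniformly on the admissible domain. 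Since $p(T-T_1)\to 0$ as $T\to\infty$, for $T$ large enough $\|K(z)\|\leq 1/2$, so $(I+K(z))^{-1}$ exists as a norm-convergent Neumann series, analytic and uniformly bounded on the required half-plane. Inserting this into the explicit formula
\[ \phi_1 S(z)\phi_2 = \phi_1 R(z)\phi_2 - \phi_1 R(z)\mathcal V(T-T_1)\tilde L\,(I + K(z))^{-1}\psi R(z)\phi_2 \]
expresses $R_{\phi_1,\phi_2}(\theta)$ as a composition of the stationary cutoff resolvents handled above with the bounded inverse $(I+K(z))^{-1}$; it therefore inherits the analytic and continuous continuations demanded by $\rm(H2)$, the $\limsup$ bound at $\theta = 0$ in the even case being delivered for free by the absolute convergence of the stationary Neumann series together with the uniform control of $(I+K(z))^{-1}$.

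The main technical obstacle will be the bookkeeping of cutoffs: one must choose $\phi_1,\phi_2$ and $\psi$ so that $\psi$ equals $1$ on the supports of $\phi_1,\phi_2$ and on the influence region of $\tilde L$ dictated by \lemref{l4}, and verify at every step that the operators genuinely factor through multiplication by $\psi$, so that the passage to a contractive Neumann series remains valid uniformly on the closed half-plane. This is precisely the role of the arguments of Lemmas 7--9 of \cite{Ki3}; the present proof adapts them by substituting \thmref{t13} for their free-space stationary input and \eqref{eq=lem5A} for the geometric localisation identity.
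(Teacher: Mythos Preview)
Your argument is correct and is essentially the approach the paper invokes by reference to Lemmas~7--9 and Theorem~14 of \cite{Ki3}: split $\mathcal U(T,0)=\mathcal V(T)+\mathcal V(T-T_1)\tilde L$ via \eqref{eq=exD} and \lemref{l4}, control every cut-off stationary piece by the Neumann expansion together with \thmref{t13}, and absorb the perturbation through a Neumann series that becomes contractive for large $T$. The bookkeeping you flag at the end---factoring all operators through $\psi$ so that only localised stationary resolvents appear, and separating the fixed cutoff $\psi$ (governing $K(z)$) from the $T$-dependent cutoffs $\phi_1,\phi_2$ demanded by $\rm(H2)$---is exactly what makes the contraction estimate uniform while still reaching the required conclusion.

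One point to tighten: your assertion that $\|\tilde L\|_{\mathcal L(H)}$ depends only on $T_1$ and not on $T$ is not forced by the literal hypotheses. Condition $\rm(H3iii)$ constrains $a_T$ and $O_T$ only on $[T_1,T]$, so in principle $\mathcal U(T_1,0)$, and hence $\tilde L$, could vary with $T$. The intended reading---and the one both your argument and the paper's reference to \cite{Ki3} require---is that the perturbation on $[0,T_1]$ is fixed and one enlarges only the stationary interval $[T_1,T]$; you should state this explicitly (or, more weakly, that $\|\tilde L\|$ stays bounded as $T\to\infty$), after which $\|K(z)\|\leq C_\psi\|\tilde L\|\sum_{k\geq0}p(kT+T-T_1)\to0$ and the rest goes through exactly as you describe.
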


{\footnotesize
}

\end{document}